\let\d=\partial
\let\wt=\widetilde
\def\cC{{\mathcal C}}
\def\cO{{\mathcal O}}
\def\cP{{\mathcal P}}
\def\cQ{{\mathcal Q}}
\def\R{{\mathbb R}}
\def\T{{\mathbb T}}
\def\virgp{\raise 2pt\hbox{,}}
\def\cdotpv{\raise 2pt\hbox{;}}
\def\div{ \hbox{\rm div}\,  }
\newcommand{\with}{\quad\hbox{with}\quad}
\newcommand{\andf}{\quad\hbox{and}\quad}
\newtheorem{thm}{Theorem}[section]
\newtheorem{prop}{Proposition}[section]
\newcommand{\ben}{\begin{eqnarray}}
\newcommand{\een}{\end{eqnarray}}
\newcommand{\beno}{\begin{eqnarray*}}
\newcommand{\eeno}{\end{eqnarray*}}
\begin{document}
\title[]{From compressible to  incompressible inhomogeneous flows
in the case of large data}
\author[R. Danchin]{Rapha\"{e}l Danchin}
\address[R. Danchin]{Universit\'{e} Paris-Est,  LAMA (UMR 8050), UPEMLV, UPEC, CNRS,
 61 avenue du G\'{e}n\'{e}ral de Gaulle, 94010 Cr\'{e}teil Cedex, France.} \email{raphael.danchin@u-pec.fr}
\author[P.B. Mucha]{Piotr Bogus\l aw Mucha}
\address[P.B. Mucha]{Instytut Matematyki Stosowanej i Mechaniki, Uniwersytet Warszawski, 
ul. Banacha 2,  02-097 Warszawa, Poland.} 
\email{p.mucha@mimuw.edu.pl}

\begin{abstract} This paper is concerned with the mathematical derivation of the inhomogeneous incompressible Navier-Stokes equations $(INS)$
from the compressible Navier-Stokes equations $(CNS)$ in the large volume viscosity limit. 
We first prove   a result of large time existence of regular solutions  for $(CNS).$  Next, as a consequence, 
we establish that  the solutions of $(CNS)$ converge to those of $(INS)$ when the volume viscosity tends to infinity. 
Analysis is performed in the two dimensional torus $\T^2,$ for general initial data.
In particular, we are able to handle \emph{large} variations of density.
\end{abstract}

\maketitle

\section{Introduction}

We are concerned with  the following compressible Navier-Stokes system:
\begin{equation}\label{CNS}
 \begin{array}{lcr}
  \rho_t + \div (\rho v) =0 & \mbox{ in } & (0,T)\times \T^2,\\[1ex]
  \rho v_t + \rho v \cdot \nabla v -\mu\Delta v - \nu \nabla \div v + \nabla P =0 
  & \mbox{ in } & (0,T)\times\T^2.
 \end{array}
\end{equation}

Above, the unknown nonnegative function  $\rho=\rho(t,x)$ and vector-field $v=v(t,x)$ stand for the  density and velocity of the fluid at $(t,x).$ 
The two real numbers  $\mu$ and $\nu$ denote the viscosity coefficients
and  are assumed to satisfy $\mu>0$ and $\nu+\mu>0.$  

We suppose that the  pressure function $P=P(\rho)$ is $C^1$ with $P'>0,$ and 
that $P(\bar\rho)=0$  for some  positive constant reference density $\bar\rho.$
 Throughout, we  set 
$$e(\rho):=\rho\int_{\bar\rho}^\rho\frac{P(t)}{t^2}\,dt.$$
Note that $e(\bar\rho)=e'(\bar\rho)=0$ and   that  $\rho e''(\rho)=P'(\rho).$ Hence  $e$ is a strictly convex function 
and,  for any interval $[\rho_*,\rho^*],$ there exist two constants $m_*$ and $m^*$ so that
\begin{equation}\label{eq:convexity}
m_*(\rho-\bar\rho)^2\leq e(\rho)\leq m^*(\rho-\bar\rho)^2. 
\end{equation}

The system is supplemented with the initial conditions
\begin{equation}
 v|_{t=0} = v_0\in\R^2\quad\hbox{and}\quad \rho|_{t=0} = \rho_0\in\R^+.
\end{equation}

 We aim at  comparing   the above compressible Navier-Stokes system with its incompressible but inhomogeneous
version. The system  in  question reads
\begin{equation}\label{INS}
\begin{array}{lcr}
 \eta_t + u\cdot \nabla\eta =0 & \mbox{ in } &  (0,T)\times \T^2, \\
 \eta u_t + \eta u\cdot \nabla u - \mu\Delta u + \nabla \Pi=0 & \mbox{ in } &  (0,T)\times \T^2, \\
 \div u=0 & \mbox{ in } & (0,T)\times \T^2.
\end{array}
\end{equation}

At the formal level, 
 one can expect the solutions to \eqref{CNS} to converge to those of \eqref{INS}
 when~$\nu$ goes to $+\infty.$  Indeed, the velocity equation of \eqref{CNS} may be rewritten
 $$
 \nabla\div v=\frac1\nu\Bigl(  \rho v_t + \rho v \cdot \nabla v -\mu\Delta v +\nabla P\Bigr)
 $$
 and thus $\nabla\div v$ should tend to $0$ when $\nu\to+\infty.$
 This means that $\div v$ should tend to be  independent of the space variable and, as it is the divergence of
 some periodic vector field, one must eventually have $\div v\to0.$
 As, on the other side,  one has for all value of $\nu,$ 
 $$\rho v_t + \rho v \cdot \nabla v -\mu\Delta v\quad\hbox{is a gradient},$$
this means that  if $(\rho,v)$ tends to some couple $(\eta,u)$ 
in  a sufficiently strong meaning, then necessarily $(\eta,u)$ should satisfy \eqref{INS}.  
\smallbreak
Hence, the question of finding an appropriate framework for justifying that heuristics naturally arises.
Let us first examine  the \emph{weak solution framework} as it requires
the minimal assumptions on the data. 
As regards System \eqref{CNS} with pressure law like $P(\rho)=a(\rho^\gamma-\bar\rho^\gamma)$ for some $a>0$ and $\gamma>1,$  the state-of-the-art for the  weak solution theory  is as follows    
(see \cite{Lions,NS} for more details):
\begin{thm}
Assume that the initial data $\rho_0$ and $v_0$ satisfy  $\sqrt\rho_0\,v_0 \in L_2(\T^2)$
and  $\rho_0 \in L_\gamma(\T^2).$ Then there exists a global in time weak solution  to \eqref{CNS}   such that
 \begin{equation}
  v\in L_\infty(\R_+;L_2(\T^2)) \cap L_2(\R_+;\dot H^1(\T^2))\ \hbox{ and } \  e(\rho) \in L_\infty(\R_+;L_1(\T^2))
 \end{equation}
 and, for all $T>0,$ 
 \begin{equation}\label{eq:energy}
 \int_{\T^2} \biggl(\frac12 \rho|v|^2 \!+\!e(\rho)\biggr)(T,\cdot)\,dx + \int_0^T (\mu\|\nabla v\|^2_{2} + 
 \nu \|\div v \|_{2}^2)\, dt \leq  \int_{\T^2} \biggl( \frac12\rho_0|v_0|^2 \!+\!e(\rho_0)\biggr)dx.
 \end{equation}
\end{thm}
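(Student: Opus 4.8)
\emph{The plan} is to construct the solution by a regularized approximation scheme and then pass to the limit, the only genuinely delicate point being the strong compactness of the density needed to handle the nonlinear pressure $P(\rho).$

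First I would set up a multi-level approximation in the spirit of Feireisl--Novotn\'y--Petzeltov\'a: a Faedo--Galerkin truncation of the momentum equation in the velocity, together with an artificial diffusion $\eps\Delta\rho$ in the continuity equation and an artificial pressure $\delta\rho^\beta$ (with $\beta$ large) added to $P.$ Each approximate system is solved by a fixed-point argument. The energy identity \eqref{eq:energy} is then recovered at every level by testing the momentum equation with $v,$ using the continuity equation to absorb the inertial terms and the relation $\rho e''(\rho)=P'(\rho)$ to rewrite the pressure work $\Int_{\T^2}P(\rho)\,\div v$ as $\Frac{d}{dt}\Int_{\T^2}e(\rho).$ This delivers, uniformly in $\eps,$ $\delta$ and the Galerkin dimension, the bounds $v\in L_\infty(\R_+;L_2)\cap L_2(\R_+;\dot H^1)$ and $e(\rho)\in L_\infty(\R_+;L_1),$ whence $\rho\in L_\infty(\R_+;L_\gamma)$ by \eqref{eq:convexity}.

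The energy bound only controls $P(\rho)$ in $L_\infty(L_1),$ which is too weak to rule out concentrations in the limit. To upgrade it I would test the momentum equation against a Bogovskii-type corrector $\nabla\Delta^{-1}\bigl(\rho^\theta-\w{\rho^\theta}\bigr)$ for a small $\theta>0;$ after the standard manipulations this yields a uniform bound on $\Int_0^T\!\Int_{\T^2}P(\rho)\rho^\theta,$ i.e.\ equi-integrability of the pressure in some $L_{1+\theta'}.$ With these estimates, weak limits $v,$ $\rho$ and $\overline{\rho v}$ exist along a subsequence; reading off the time-regularity of $\rho v$ from the momentum equation and invoking an Aubin--Lions argument gives strong convergence of $\rho v,$ which together with the $\sqrt\rho\,v$ bound lets one pass to the limit in the convective term $\rho v\otimes v.$

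The hard part is the pressure, which requires strong convergence of $\rho.$ I would resolve it by Lions' effective viscous flux method: applying the divergence to the momentum equation shows that the flux $(\mu+\nu)\div v-P(\rho)$ is more regular than each of its terms separately, and a compensated-compactness (div--curl) commutator argument yields the weak-continuity identity
\[
\overline{P(\rho)\,\rho}-\overline{P(\rho)}\,\rho=(\mu+\nu)\bigl(\overline{\rho\,\div v}-\rho\,\div v\bigr).
\]
Since $P$ is nondecreasing one has $\overline{P(\rho)\,\rho}-\overline{P(\rho)}\,\rho\geq0,$ hence $\overline{\rho\,\div v}-\rho\,\div v\geq0.$ On the other hand, using that both $\rho$ and its weak limit are renormalized solutions of the continuity equation, the choice $b(s)=s\ln s$ gives
\[
\Frac{d}{dt}\Int_{\T^2}\bigl(\overline{\rho\ln\rho}-\rho\ln\rho\bigr)\,dx=-\Int_{\T^2}\bigl(\overline{\rho\,\div v}-\rho\,\div v\bigr)\,dx\leq0.
\]
As $\overline{\rho\ln\rho}-\rho\ln\rho\geq0$ by convexity and vanishes at $t=0,$ it must vanish for all time, which forces $\rho_n\to\rho$ strongly in $L_1$ and hence $P(\rho_n)\to P(\rho)$ almost everywhere; this closes the passage to the limit. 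The only two-dimensional caveats are the integrability thresholds (one needs $\rho\in L_2$ locally to justify the renormalized equation and the flux identity, which the artificial pressure guarantees before $\delta$ is removed), but these are handled by the standard successive limits in $\eps$ and then $\delta.$
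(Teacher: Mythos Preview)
The paper does not actually prove this theorem: it is quoted as background from the literature (the references \cite{Lions,NS}), and no argument is supplied beyond the citation. The only related computation in the paper is the short proposition in Section~\ref{s:energy}, which derives the energy inequality \eqref{eq:energy} by testing the momentum equation with $v$ and using $\rho e''(\rho)=P'(\rho)$ to convert the pressure term into $\frac{d}{dt}\int e(\rho)\,dx$ --- but this is done only for \emph{sufficiently smooth} solutions, not for weak ones, and no existence is established.

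Your sketch, by contrast, outlines the full Lions--Feireisl construction (Galerkin approximation, artificial viscosity and pressure, Bogovskii-type higher integrability of the pressure, effective viscous flux identity, renormalized continuity equation, and the $\rho\ln\rho$ argument for strong convergence of the density). This is precisely the route taken in the cited references, and as a high-level summary it is accurate. So your proposal is not wrong; it simply does vastly more than the paper, which treats the statement as a black box. If you only need what the paper uses, the formal energy computation for smooth solutions suffices; if you genuinely want to prove existence of weak solutions, your outline is the right one, though each step you list hides substantial work (in particular the successive limits $\eps\to0$ then $\delta\to0$, and the renormalization arguments in the borderline two-dimensional integrability range).
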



For System \eqref{INS}, there is a similar  weak solution theory that has been initiated 
by  A. Kazhikhov in \cite{K}, then continued by J. Simon in \cite{S}
and completed by  P.-L. Lions in \cite{Lions96}.
However, to the best of our knowledge, it is not known how to connect System \eqref{CNS} to \eqref{INS} 
in  that framework.  Justifying the convergence in that setting may be extremely difficult 
owing to the fact that the key extra estimate for the density that allows to achieve 
the existence of weak solutions for  \eqref{CNS} strongly depends on the viscosity coefficient $\nu$, 
and collapses when  $\nu$ goes to infinity.


\smallbreak
This thus motivates us to consider the problem for more regular solutions.
As regards System \eqref{CNS} in the multi-dimensional case,  
recall that the global  existence issue of  strong  unique solutions 
has been  answered just partially, and mostly 
in the  small data case, see e.g. \cite{D00,Ko,MaNi,Mu03,MuZa1,MuZa2,VaZa}. 
For general large data (even if very smooth), only local-in-time solutions 
are available (see e.g. \cite{D01,Nash}).

 The theory of strong solution for the inhomogeneous Navier-Stokes system \eqref{INS} 
is more complete (see e.g. \cite{DaMu12,LaSo,HPZ,Li}).  In fact,  
 the results are roughly the same as for the homogeneous (that is with constant density)
 incompressible  Navier-Stokes system.
In particular, we  proved in \cite{DM17} that, in the two-dimensional case, system \eqref{INS} is uniquely 
and globally solvable in dimension two whenever the initial velocity is in $H^1$ and the initial density is nonnegative 
and bounded (initial data with vacuum may thus be considered). 

It is tempting to study whether those better properties  in dimension two for the (supposedly) 
limit system \eqref{INS} may help us to improve our knowledge of System \eqref{CNS}
in the case where the volume viscosity is very large. 
More precisely, we here want to  address the following two questions:
\begin{itemize}
\item For  regular data with no vacuum, then given any fixed $T>0,$ can we find $\nu_0$ so that 
the solution remains smooth (hence unique) until  time $T$ for all~$\nu \geq \nu_0$~? 
\item Considering a family $(\rho_\nu,v_\nu)$ of solutions to \eqref{CNS} and letting $\nu \to \infty,$ can we 
show strong convergence to some couple $(\eta,u)$ satisfying \eqref{INS} and, as the case may be, give
an upper bound for the rate of convergence~?
\end{itemize}

Those two issues have been considered recently in our paper \cite{DM-AM}, 
 in the particular case where the initial density is a perturbation of order  $\nu^{-\frac12}$
of some  constant positive density (hence the limit system is just 
 the classical incompressible Navier-Stokes equation).  There, our results were based 
 on Fourier analysis and involved so-called critical Besov norms. 
 The cornerstone of the method was a refined analysis 
 of the linearized system about the constant state $(\rho,v)=(\bar\rho,0),$
 thus precluding us from considering large density variations.

The present  paper aims at shedding  a new light  on this issue, pointing out
different results and techniques  than in \cite{DM-AM}. In particular, we will go beyond the slightly 
inhomogeneous case, and will be able to consider  large variations of density.
As regards the techniques, we here meet  another  motivation for our paper, which  is strictly mathematical:
 we want to advertize  two tools, that  can be  of some use in  the analysis  of systems of fluid mechanics:
\begin{itemize}
\item   The first one is a nonstandard estimate with (limited) 
loss of integrability for solutions of   the transport equation 
by a non Lipschitz vector-field that has been first pointed out by B. Desjardins in \cite{Des-DIE}
(see Section \ref{s:transport}). Proving it requires some Moser-Trudinger 
inequality that holds true only in dimension two\footnote{Consequently, we do not know how
to adapt our approach to the higher dimensional case.}.
\item  The second tool is an estimate for a parabolic system with just bounded coefficients in the maximal regularity framework  of  $L_p$ spaces with $p$ close to 2 (Section \ref{s:maxreg}). 
\end{itemize}
\medbreak
For notational simplicity, \emph{we assume from now on that the shear viscosity $\mu$ is equal to~$1$}
(which may always been achieved after a suitable rescaling). 
Our answer to the first question then reads as follows:
 \begin{thm}\label{th:1} Fix some $T>0.$ Let  $\rho_*$ and $\rho^*$
 satisfy $0<\rho_*<\rho^*,$ and assume that 
 \begin{equation}\label{eq:rho0}
 2\rho_* \leq \rho_0\leq \frac 12\, \rho^*.\end{equation}  
  There exists an exponent $q>2$ depending only on $\rho_*$ and $\rho^*$ 
  such that if   $\nabla\rho_0\in L_q(\T^2)$ then for any  vector 
  field  $v_0$ in $W^{2-2/q}_{q}(\T^2)$  
 satisfying \begin{equation}\label{initial-div}\nu^{1/2}\|\div v_0\|_{L_2} \leq 1,\end{equation}
   there exists $\nu_0=\nu_0(T,\rho_*,\rho^*,\|\nabla\rho_0\|_q,\|v_0\|_{W^{2-\frac2q}_q},P,q)$ such that
      System \eqref{CNS} with  $\nu \geq \nu_0$  has  a unique  solution $(\rho,v)$   on the time interval $[0,T],$ fulfilling 
  \begin{equation}
   v\in \cC([0,T];  W^{2-2/q}_{q}(\T^2)),\quad v_t,\nabla^2 v\in L_q([0,T]\times\T^2),\quad \rho \in \cC([0,T];W^1_q(\T^2)),
  \end{equation}
  \begin{equation}
 \andf  \rho_* \leq \rho(t,x) \leq \rho^* \ \mbox{ for all }\ (t,x) \in  [0,T]\times\T^2.\hspace{5cm}
  \end{equation}

  Furthermore, there exists a constant $C_q$ depending only on $q,$ 
  a constant $C_P$ depending only on $P,$
  and a universal constant $C$ such that for all $t\in[0,T],$
  \begin{multline}\label{E0}
  \|v(t)\|_{H^1}+\nu^{\frac12}\|\div v(t)\|_{L_2}+\|\rho(t)-\bar\rho\|_{L_2}+\|\nabla v\|_{L_2([0,t];H^1)} +\|v_t\|_{L_2(0,t\times\R^2)}
  \\+ \nu^{\frac12}\|\nabla\div v\|_{L_2(0,t\times\R^2)}\leq C e^{C\|v_0\|_{2}^4} E_0,
  \end{multline}
  \begin{multline}\label{E1}
  \|v(t)\|_{W^{2-\frac2q}_{q}}\!+\! \|v_t,\nabla^2v,\nu\nabla\div v\|_{L_q([0,t]\times\T^2)} 
  \\\leq  C_q\Bigl(\|v_0\|_{W^{2-\frac2q}_q}+C_Pt^{\frac1q}\bigl(1+\|\nabla\rho_0\|_{L_q}\bigr)
    \exp (t^{\frac1{q'}} I_0(t))\Bigr)\end{multline}
\begin{equation}\label{E2}
 \hbox{and}\quad\|\nabla \rho(t)\|_{L_q}\leq \bigl(1+\|\nabla\rho_0\|_{L_q}\bigr)  \exp\bigl\{t^{\frac1{q'}}I_0(t)\bigr\},\hspace{4cm}
\end{equation}
with $ E_0:=1+
\|v_0\|_{H^1}+ \|\rho_0-\bar\rho\|_{L_2}$ 
and
$$I_0(t):= C_q\biggl(\|v_0\|_{W^{2-\frac2q}_{q}}+C_Pt^{\frac1q}\bigl(1+\|\nabla\rho_0\|_{L_q}\bigr)e^{CE_0^2te^{C\|v_0\|_{L_2}^4}}\biggr)\cdotp$$
 \end{thm}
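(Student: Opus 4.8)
The plan is to combine a local-in-time existence theory for \eqref{CNS} (available for smooth data by \cite{D01,Nash}) with a set of a priori estimates that are \emph{uniform in $\nu$} for $\nu$ large, and then to run a continuation argument. Concretely, for fixed $\nu$ let $T^*\le T$ be the largest time up to which the local solution exists and satisfies $\rho_*\le\rho\le\rho^*$ together with the bounds \eqref{E0}--\eqref{E2}; I would show that if $\nu\ge\nu_0$ then these bounds cannot all saturate before $T,$ so that $T^*=T.$ The three estimates are genuinely coupled—\eqref{E1} needs control of $\nabla P=P'(\rho)\nabla\rho,$ \eqref{E2} needs control of the transporting field $v,$ and the very bounds $\rho_*\le\rho\le\rho^*$ feed back into both—so the continuation argument is really a bootstrap on the whole triple, closed by the smallness $\nu^{-1/2}$ of the compressible mode.

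First I would establish the energy-level bounds \eqref{E0}, which are the only ones required to be uniform in $\nu.$ The basic balance \eqref{eq:energy} controls $\sqrt\rho\,v$ in $L_\infty(L_2),$ $\nabla v$ in $L_2(L_2),$ $\nu^{1/2}\div v$ in $L_2(L_2)$ and $e(\rho)$—hence, by \eqref{eq:convexity}, $\rho-\bar\rho$ in $L_\infty(L_2).$ To reach the second-order quantities I would test the momentum equation of \eqref{CNS} against $v_t,$ turning the viscous terms into $\frac12\frac{d}{dt}(\|\nabla v\|_2^2+\nu\|\div v\|_2^2),$ absorbing the pressure contribution through $\partial_tP(\rho)=-P'(\rho)\div(\rho v)$ and the convexity of $e,$ and dominating the convective term $\int\rho\,v\cdot\nabla v\cdot v_t$ by $\frac12\int\rho|v_t|^2$ plus a term handled via the two-dimensional Ladyzhenskaya inequality $\|v\|_4^2\lesssim\|v\|_2\|\nabla v\|_2.$ Gronwall then produces precisely the factor $e^{C\|v_0\|_2^4}$ and the bound \eqref{E0}; the hypothesis $\nu^{1/2}\|\div v_0\|_{L_2}\le1$ is what keeps the $\nu^{1/2}\div v$ terms $O(1)$ rather than $O(\nu^{1/2}).$

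The heart of the matter, and what I expect to be the main obstacle, is the density-gradient bound \eqref{E2}. The natural route—transporting $\nabla\rho$ and invoking $\frac{d}{dt}\|\nabla\rho\|_{L_q}\lesssim\|\nabla v\|_{L_\infty}\|\nabla\rho\|_{L_q}$—fails because at the energy level $v$ lies only in $L_2(H^2),$ and in two dimensions $H^2\not\hookrightarrow W^1_\infty:$ the transporting field is exactly borderline non-Lipschitz. This is where the first tool of the paper enters. I would apply the Desjardins-type estimate of Section \ref{s:transport}, whose proof rests on a Moser--Trudinger inequality valid only in dimension two, to bound $\|\nabla\rho\|_{L_q}$ with a \emph{limited loss of integrability} in terms of the energy norms of $v$ alone. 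This is what generates the anomalous powers $t^{1/q'}$ and the nested exponential structure of $I_0(t),$ and it is one of the two constraints forcing $q$ to be taken close to $2.$

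Finally I would close the loop using the second tool. Writing the momentum equation as a parabolic system $\rho v_t-\Delta v-\nu\nabla\div v=-\rho\,v\cdot\nabla v-\nabla P$ with the merely bounded coefficient $\rho\in[\rho_*,\rho^*],$ I would invoke the maximal $L_q$-regularity estimate of Section \ref{s:maxreg}—valid for $q$ near $2$ with a constant depending only on $\rho_*,\rho^*,$ which is what pins down the admissible exponent—to control $\|v_t,\nabla^2v,\nu\nabla\div v\|_{L_q}$ by the $L_q$ norm of the right-hand side. There $\|\nabla P\|_{L_q}\le C_P\|\nabla\rho\|_{L_q}$ is supplied by \eqref{E2} and the convective term by \eqref{E0} via interpolation, which yields \eqref{E1}. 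The same estimate, through $\nu\nabla\div v\in L_q(L_q)$ and the embedding $W^1_q\hookrightarrow L_\infty$ (here $q>2$ is used), gives $\int_0^t\|\div v\|_\infty\,ds\lesssim t^{1/q'}\nu^{-1}(\cdots),$ so that integrating the continuity equation along characteristics, $\rho=\rho_0\exp(-\int_0^t\div v\,ds),$ keeps $\rho_*\le\rho\le\rho^*$ for $\nu$ large—this is precisely where $2\rho_*\le\rho_0\le\frac12\rho^*$ and the choice of $\nu_0$ are consumed. The coupled bounds \eqref{E0}--\eqref{E2} then close on all of $[0,T],$ the continuation argument gives $T^*=T,$ and uniqueness follows from the regularity by a standard energy argument for the difference of two solutions.
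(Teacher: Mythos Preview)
Your overall architecture—bootstrap on the triple \eqref{E0}--\eqref{E2} combining the higher-order energy method, the Desjardins transport estimate, and the $L_q$ maximal regularity with merely bounded $\rho$—is exactly that of the paper, but the chaining you propose has two genuine gaps that prevent the loop from closing.

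First, the second-order energy inequality does \emph{not} deliver \eqref{E0} by itself. To get $\|\nabla^2 v\|_{L_2(L_2)}$ you must also test the momentum equation by $-\Delta v$, and then the pressure contribution $\int\nabla P\cdot\Delta v=\int\nabla P\cdot\nabla\div v$ leaves a residual term $\nu^{-1}\!\int_0^T\|\nabla\rho\|_{L_2}^2\,dt$ on the right (this is the content of Proposition~\ref{p:E2}). Gronwall alone cannot absorb it; the paper closes this sub-loop by invoking the Desjardins estimate at the \emph{lowest} level $r=2$, which bounds $\|\nabla\rho\|_{L_\infty(L_2)}$ by $\exp\bigl(CT\int_0^T\|\nabla^2 v\|_2^2\,dt\bigr)$, and then uses the smallness of $\nu^{-1}$ to break the resulting feedback. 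Second, and more structurally, Proposition~\ref{thm:transport} controls $\|\nabla\rho\|_{L_p}$ only for $p$ \emph{strictly less} than $q$: the loss of integrability is real, so you cannot obtain \eqref{E2} from it directly. The paper therefore inserts an intermediate $L_p$ round with $2<p<q$: Desjardins gives $\nabla\rho\in L_\infty(L_r)$ for $r<q$; this feeds Theorem~\ref{th:lin} at level $p$ to produce $\nabla^2 v\in L_p((0,T)\times\T^2)$, whence $\nabla v\in L_1(0,T;L_\infty)$ by Sobolev (here $p>2$ is used). Only now, with the velocity genuinely Lipschitz, does the \emph{standard} transport estimate—with no loss—yield $\|\nabla\rho\|_{L_q}$, after which a second pass through Theorem~\ref{th:lin} at level $q$ gives \eqref{E1}. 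Incidentally, the factors $t^{1/q'}$ you ascribe to the Moser--Trudinger step actually arise in this last round, from H\"older in time on $\|\nabla v\|_{L_1(L_\infty)}\le C t^{1/p'}\|\nabla^2 v\|_{L_p}$.
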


 As the data we here consider are regular and bounded away from zero, the short-time  existence
 and uniqueness issues are clear (one may e.g. adapt \cite{D10} to the case of periodic boundary conditions). 
 In order to achieve large time existence, we shall first take
 advantage of a rather standard higher order energy estimate (at the $H^1$ level for the velocity)
 that will provide us with a control of $\nabla v$ in $L^2(0,T;H^1)$
 in terms of the data and of the norm of $\nabla\rho$ in $L^\infty(0,T;L^2).$
 The difficulty now is to control that latter norm, given that, at this stage, one has no
  bound for $\nabla v$ in $L^1(0,T;L^\infty).$
 It may be overcome by adapting to our framework some estimates with 
 loss of integrability for the transport equation, that have been first pointed 
 out by B. Desjardins in \cite{Des-DIE}.
 However, this is not quite the end of the story since those estimates involve the quantity
 $\int_0^T\|\div v\|_{L^\infty}\,dt.$  Then, the key observation is that
 the linear maximal regularity theory  for the linearization of the momentum equation 
 of \eqref{CNS} (neglecting the pressure term and taking $\rho\equiv1$) provides,  for all $1<q<\infty,$ 
 a control on $\nu\|\nabla\div v\|_{L_q(0,T;L_q(\T^2))}$   (not just $\|\nabla\div v\|_{L_q(0,T;L_q(\T^2))}$)
in terms of $\|v_0\|_{W^{2-\frac2q}_q}.$
  In our framework where $\rho$ is not constant, it turns out 
 to be possible to recover a similar estimate if $q$ is close enough to $2,$ 
 and thus to eventually have, by Sobolev embedding,  $\int_0^T\|\div v\|_{L^\infty}\,dt=\cO(\nu^{-1}).$
 Then, putting all the arguments together and bootstrapping 
 allows to get  all the estimates of Theorem \ref{th:1}, for large enough $\nu.$  
 \medbreak
 Regarding the  asymptotics  $\nu \to +\infty,$  it is clear that  if one starts with fixed initial 
 data, then uniform estimates are available  from Theorem \ref{th:1}, only if
 we assume that  $\div v_0\equiv0.$
 Under that assumption,  Inequalities \eqref{E0} and \eqref{E1} already ensure  that
 $$
 \div v=\cO(\nu^{-1/2})\ \hbox{ in }\  L_\infty(0,T;L_2)\quad\hbox{and}\quad
\nabla\div v=\cO(\nu^{-1})\ \hbox { in }\ L_q(0,T\times\T^2).
 $$
 Then, combining with  the uniform bounds provided by \eqref{E1} and \eqref{E2}, it is not difficult
 to pass to the weak limit in System \eqref{CNS} and to find
 that the limit solution fulfills System~\eqref{INS}. 
 \smallbreak
 In the theorem below, we state a   result that involves strong norms of all quantities
 at the level of energy norm,   and exhibit an explicit  rate of convergence.
 \begin{thm}\label{th:2}
  Fix some  $T>0$ and take initial data  $(\rho_0,v_0)$ fulfilling the assumptions of Theorem \ref{th:1} with, 
  in addition, $\div v_0\equiv0.$ Denote by $(\rho_\nu,v_\nu)$ the corresponding solution
  of \eqref{CNS} with volume viscosity $\nu\geq\nu_0.$
  Finally,  let  $(\eta,u)$ be the global solution of \eqref{INS} supplemented
  with the same initial data $(\rho_0,v_0).$
    Then we have
\begin{multline}\label{eq:conv}
 \sup_{t\leq T} \bigl(\|\rho_\nu(t)-\eta(t)\|_{L_2}^2 + \|\cP v_\nu(t) - u(t)\|_{L_2}^2 + \|\nabla\cQ v_\nu(t)\|_{L_2}^2\bigr)\\+ \int_0^T\bigl(\|\nabla (\cP v_\nu -u)\|^2_{L_2} + \|\nabla\cQ v_\nu\|^2_{H^1}\bigr) dt \leq C_{0,T} \nu^{-1},
\end{multline}
 where $\cP$ and $\cQ$  are the Helmholtz projectors 
 on divergence-free and potential  vector fields, respectively\footnote{that are defined by 
$\cQ v := -\nabla (-\Delta)^{-1} \div v \mbox{ \  and \ } \cP v := v - \cQ v.$}, and 
 where $C_{0,T}$ depends only on $T$ and on the norms of the initial data.
 \end{thm}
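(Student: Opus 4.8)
The plan is to run coupled energy estimates for the two differences
$$
V := v_\nu - u \quad\hbox{and}\quad \sigma := \rho_\nu - \eta,
$$
exploiting the favourable sign of the volume‑viscosity term together with the uniform‑in‑$\nu$ bounds of Theorem \ref{th:1}, and then to transfer the resulting control to the Helmholtz components $\cP v_\nu - u$ and $\cQ v_\nu$. Since $\div v_0\equiv0$ forces $V|_{t=0}=0$ and $\sigma|_{t=0}=0$, both differences start from rest, which is what will ultimately produce a genuine rate. Subtracting the momentum equations of \eqref{CNS} and \eqref{INS} and regrouping the time‑derivative and convective terms gives
\begin{equation*}
\rho_\nu(V_t + v_\nu\cdot\nabla V) + \rho_\nu\,V\cdot\nabla u - \Delta V - \nu\nabla\div v_\nu + \nabla\bigl(P(\rho_\nu)-\Pi\bigr) = -\sigma\,(u_t + u\cdot\nabla u),
\end{equation*}
while subtracting the mass equations and using $\div u=0$ yields
\begin{equation*}
\sigma_t + u\cdot\nabla\sigma = -\,V\cdot\nabla\rho_\nu - \rho_\nu\,\div v_\nu.
\end{equation*}

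Next I would test the first identity with $V$ and the second with $\sigma$, and add. By the continuity equation the inertial terms produce $\tfrac12\frac{d}{dt}\int_{\T^2}\rho_\nu|V|^2$, the Laplacian produces $\|\nabla V\|_{L_2}^2$, and, crucially, an integration by parts turns the volume‑viscosity term into the nonnegative contribution $\nu\|\div v_\nu\|_{L_2}^2$ on the left; the transport term in the mass equation drops since $\div u=0$. This leads to
\begin{multline*}
\frac12\frac{d}{dt}\Bigl(\int_{\T^2}\rho_\nu|V|^2\,dx + \|\sigma\|_{L_2}^2\Bigr) + \|\nabla V\|_{L_2}^2 + \nu\|\div v_\nu\|_{L_2}^2
= -\int_{\T^2}\rho_\nu(V\cdot\nabla u)\cdot V\,dx\\
+ \int_{\T^2}(P(\rho_\nu)-\Pi)\,\div v_\nu\,dx - \int_{\T^2}\sigma\,(u_t+u\cdot\nabla u)\cdot V\,dx - \int_{\T^2}(V\cdot\nabla\rho_\nu)\,\sigma\,dx - \int_{\T^2}\rho_\nu\,\sigma\,\div v_\nu\,dx.
\end{multline*}
The whole point is to show that the right‑hand side splits into \emph{Gronwall terms} (quadratic in $(\sigma,V)$ with time‑integrable coefficients) and genuine $\cO(\nu^{-1})$ \emph{sources}, absorbing the leftover gradients into $\|\nabla V\|_{L_2}^2$ and $\nu\|\div v_\nu\|_{L_2}^2$.

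For the sources I would use Theorem \ref{th:1}. The pressure‑mismatch term is handled by Young's inequality, $\int(P(\rho_\nu)-\Pi)\div v_\nu\le\tfrac\nu2\|\div v_\nu\|_{L_2}^2 + \tfrac1{2\nu}\|P(\rho_\nu)-\Pi\|_{L_2}^2$, the first half being absorbed by the good term and the second integrating to $\cO(\nu^{-1})$ since $\rho_\nu\in[\rho_*,\rho^*]$ and $\Pi\in L_2(0,T;L_2)$; similarly $\int\rho_\nu\sigma\div v_\nu\le\tfrac12\|\sigma\|_{L_2}^2 + C\|\div v_\nu\|_{L_2}^2$ with $\int_0^T\|\div v_\nu\|_{L_2}^2=\cO(\nu^{-1})$ by \eqref{E0}. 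The term $\int\rho_\nu(V\cdot\nabla u)\cdot V$ is bounded in two dimensions by Ladyzhenskaya's inequality as $\varepsilon\|\nabla V\|_{L_2}^2 + C\|\nabla u\|_{L_2}^2\|V\|_{L_2}^2$, with $\|\nabla u\|_{L_2}^2\in L_1(0,T)$ from the energy balance of \eqref{INS}. The density‑coupling term is the most delicate: since $\nabla\rho_\nu$ is only in $L_q$ with $q$ close to $2$, I would write $\int|V||\nabla\rho_\nu||\sigma|\le\|\nabla\rho_\nu\|_{L_q}\|V\|_{L_a}\|\sigma\|_{L_2}$ with $\tfrac1a=\tfrac12-\tfrac1q$, invoke the two‑dimensional embedding $H^1(\T^2)\hookrightarrow L_a$ valid for every finite $a$, and absorb the resulting $\|\nabla V\|_{L_2}$, leaving a Gronwall term with coefficient $C\|\nabla\rho_\nu\|_{L_q}^2$, bounded on $[0,T]$ uniformly in $\nu$ by \eqref{E2}. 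Finally $\int\sigma\,(u_t+u\cdot\nabla u)\cdot V$ is controlled by $\varepsilon\|\nabla V\|_{L_2}^2 + C\|u_t+u\cdot\nabla u\|_{L_4}^{4/3}(\|\sigma\|_{L_2}^2+\|V\|_{L_2}^2)$, again a Gronwall term, provided $u_t+u\cdot\nabla u\in L_{4/3}(0,T;L_4)$, a mild regularity property that the solution of \eqref{INS} inherits from the smoothness of the data.

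With $\sigma|_{t=0}=V|_{t=0}=0$, Gronwall's lemma then gives $\sup_{[0,T]}\bigl(\|\sigma\|_{L_2}^2+\int_{\T^2}\rho_\nu|V|^2\bigr) + \int_0^T\bigl(\|\nabla V\|_{L_2}^2+\nu\|\div v_\nu\|_{L_2}^2\bigr)\le C_{0,T}\nu^{-1}$. It then remains to read off \eqref{eq:conv}. Since $\rho_\nu\ge\rho_*$ controls $\|V\|_{L_2}$, and since $V = (\cP v_\nu - u) + \cQ v_\nu$ with $\|\cQ v_\nu\|_{L_2}\le C\|\div v_\nu\|_{L_2}$ and $\|\nabla\cQ v_\nu\|_{L_2}\le C\|\div v_\nu\|_{L_2}$ by boundedness of the Riesz transforms on $\T^2$, the bounds on $\sigma$ and $V$ yield the $L_2$ control of $\rho_\nu-\eta$ and $\cP v_\nu-u$ and the $L_2(0,T;\dot H^1)$ control of $\cP v_\nu-u$, all at rate $\nu^{-1}$; the two purely potential quantities $\sup_t\|\nabla\cQ v_\nu\|_{L_2}^2$ and $\int_0^T\|\nabla\cQ v_\nu\|_{H^1}^2$ are $\cO(\nu^{-1})$ directly from \eqref{E0}. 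I expect the main obstacle to be precisely the density‑coupling term $\int(V\cdot\nabla\rho_\nu)\sigma$: with $\nabla\rho_\nu$ merely in $L_q$ one must trade the loss of integrability against the two‑dimensional Sobolev embedding and the viscous dissipation, which is exactly where the restriction to $\T^2$ and to $q$ close to $2$ enters.
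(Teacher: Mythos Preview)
Your approach is correct and genuinely different from the paper's. The paper does \emph{not} run the coupled energy estimate on $(V,\sigma)=(v_\nu-u,\rho_\nu-\eta)$ directly. Instead it introduces an auxiliary ``incompressible'' density $\wt\rho$ solving $\wt\rho_t+\cP v_\nu\cdot\nabla\wt\rho=0$ with $\wt\rho|_{t=0}=\rho_0$, splits $\rho_\nu-\eta=(\rho_\nu-\wt\rho)+(\wt\rho-\eta)$, and first shows $\|\rho_\nu-\wt\rho\|_{L_q}=\cO(\nu^{-1})$ directly from the bounds of Theorem~\ref{th:1}. It then rewrites the momentum equation of \eqref{CNS} as an equation for $\cP v_\nu$ with density coefficient $\wt\rho$ plus remainder terms, subtracts \eqref{INS}, and tests by $\cP v_\nu-u$; the difference $\wt\rho-\eta$ is controlled via the transport estimate $\|\wt\rho-\eta\|_{L_2}\lesssim\int_0^T\|\cP v_\nu-u\|_{L_{q^*}}\,dt$ and fed back through Gronwall.

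Your route is more elementary: no auxiliary density, and the observation that $-\nu\int\nabla\div v_\nu\cdot V=\nu\|\div v_\nu\|_{L_2}^2$ (because $\div V=\div v_\nu$) turns the volume viscosity into an extra dissipation on the left is a clean substitute for the paper's splitting. You then confront the density--coupling term $\int(V\cdot\nabla\rho_\nu)\sigma$ head-on, which the paper's decomposition is designed to avoid; your treatment via $\|\nabla\rho_\nu\|_{L_q}\|V\|_{L_{2q/(q-2)}}\|\sigma\|_{L_2}$ and the two-dimensional embedding $H^1\hookrightarrow L_{2q/(q-2)}$ is perfectly valid given the uniform bound \eqref{E2}. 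What the paper's approach buys is a cleaner separation of the ``compressible defect'' $r=\rho_\nu-\wt\rho$ (handled entirely by Theorem~\ref{th:1}) from a pure INS stability estimate; what yours buys is brevity and, as a bonus, the sharper $\int_0^T\|\div v_\nu\|_{L_2}^2=\cO(\nu^{-2})$.

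One small point to clean up: the regularity $u_t+u\cdot\nabla u\in L_{4/3}(0,T;L_4)$ you invoke for term (iii) is not immediate when $q$ is only slightly larger than $2$, since $L_q(\T^2)\not\hookrightarrow L_4(\T^2)$. Replace $L_4$ by $L_q$ there: bound $\int\sigma(u_t+u\cdot\nabla u)\cdot V\le\|\sigma\|_{L_2}\|u_t+u\cdot\nabla u\|_{L_q}\|V\|_{L_{2q/(q-2)}}$, use $H^1\hookrightarrow L_{2q/(q-2)}$ and Young to get a Gronwall coefficient $\|u_t+u\cdot\nabla u\|_{L_q}^2$. This is in $L_1(0,T)$ since the INS solution with data $(\rho_0,v_0)\in W^1_q\times W^{2-2/q}_q$ enjoys $u_t,\nabla^2u\in L_q((0,T)\times\T^2)\subset L_2(0,T;L_q)$ (the analogue for Stokes of the paper's Theorem~\ref{th:lin}, which the paper itself implicitly uses when it invokes $\|\cP v_t\|_{L_q}$).
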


At first glance, one may think our issue to be closely  related to the  question of low Mach number limit studied in  e.g. \cite{D02,FeNo}. However,  there is an essential difference in the 
mechanism leading to convergence as may be easily seen from a rough 
analysis of the linearized system \eqref{CNS}. 
Indeed, in the case $\bar\rho=\mu=1$ and $P'(1)=1$ (for notational simplicity),
 that linearization reads  (in the unforced case):
\begin{equation*}
 \begin{array}{l}
  \eta_t +\div u=0, \\[1ex]
v_t -\Delta v - \nu \nabla \div v +  \nabla \eta =0.
 \end{array}
\end{equation*}
Eliminating the velocity we obtain the damped wave equation
$$
 \eta_{tt} - (1+\nu)\Delta \eta_t -  \Delta \eta=0,
$$
that  can be solved explicitly at the level of the Fourier transform. We obtain two modes, one strongly parabolic, disappearing for $\nu \to \infty$, and the second one having the following 
form, in the high frequency regime:
\begin{equation*}\label{formal}
  \eta (t) \sim  \eta(0) e^{-\frac{t}{(1+\nu)}} \to  \eta(0).
\end{equation*}
This means that   at the same time, we have that  $\eta (t)$ tends strongly 
 to   $0$ as $t \to+ \infty$ even for very large $\nu,$  
 but that for all $t>0$ (even very large), $\eta(t)\to\eta(0)$ when $\nu$ tends to $+\infty.$
  \medbreak
The behavior corresponding to the low Mach number limit is  of a different nature,  
as it corresponds to the linearization
\begin{equation*}
 \begin{array}{l}
  \eta_t +\frac1\varepsilon\div u=0, \\[1ex]
v_t -\Delta v - \nu\nabla \div v +  \frac1\varepsilon\nabla \eta =0,
 \end{array}
\end{equation*}
which leads to the wave equation
$$ \eta_{tt} - (1+\nu)\Delta \eta_t -  \frac1{\varepsilon^2}\Delta \eta=0.$$
Asymptotically for $\varepsilon\to0,$ the above  damped wave equation
behaves as a wave equation with propagation speed $1/\varepsilon.$
Hence, in the periodic setting, we have huge oscillations that preclude
any strong convergence result. However, after filtering 
by the wave operator, convergence becomes strong, 
which entails  weak convergence, back to the original unknowns (see \cite{D02} for more details).  
\smallbreak 
The main idea of Theorem \ref{th:2} is just to compute the distance
between the compressible and the incompressible solutions,  by means of the standard energy norm
(in sharp  contrast with the approach in \cite{DM-AM} where critical Besov norms are used).
In order to do so, it is convenient to decompose $\rho-\eta$ into  two parts:
 $$
 \rho-\eta=(\rho-\wt\rho)+ (\wt\rho-\eta)$$ where the auxiliary density $\wt\rho$ 
is the transported of  $\rho_0$ by the flow of the divergence-free vector-field $\cP v.$
As  the bounds of Theorem \ref{th:1} readily ensure that $\|\rho-\wt\rho\|_q=\cO(\nu^{-1}),$
one may, somehow, perform the energy argument  as if 
comparing $(\wt\rho,v)$ and $(\eta,u).$
\medbreak
We end that introductory part presenting the main notations 
that are used throughout the paper.   By $\nabla$ we denote the gradient with respect to space variables, and by  $u_t,$ the time derivative of function $u$.
   By $\|\cdot\|_{L_p(Q)}$ (or sometimes just $\|\cdot\|_p$), 
    we mean the $p$-power Lebesgue norm corresponding to  the set $Q,$ and $L_p(Q)$ is 
   the corresponding Lebesgue space. We denote  by $H^s$ and $W^s_p$  the Sobolev
   (Slobodeckij for $s$ not integer) space on the torus $\T^2,$   and put $H^s=W^s_2.$
   The homogeneous versions of those spaces (that is the corresponding
   subspace of functions with null mean) are  denoted by $\dot H^s$ and $\dot W^s_p,$ 
respectively.


Generic constants are denoted by $C,$  $A\lesssim B$ means that $A\leq CB,$
and $A\approx B$ stands for $C^{-1}A\leq B\leq CA.$


\section{Energy estimates}\label{s:energy}

The aim of this part is to provide bounds via energy type estimates. We assume that the density is bounded from  above and below. 
Let us first recall the basic energy identity.
\begin{prop}
 For any $T>0,$  sufficiently smooth solutions to \eqref{CNS}   obey Inequality \eqref{eq:energy}.
\end{prop}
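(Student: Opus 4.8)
The plan is to establish the differential energy balance, of which \eqref{eq:energy} is the time-integrated form, by testing the momentum equation against the velocity and exploiting the mass conservation law. First I would take the $L_2(\T^2)$ inner product of the second equation of \eqref{CNS} with $v$. The inertial part $\rho v_t+\rho v\cdot\nabla v$, paired with $v$, equals $\rho$ times the material derivative of $|v|^2/2$; combining this with the continuity equation $\rho_t+\div(\rho v)=0$ produces the exact time derivative $\frac{d}{dt}\int_{\T^2}\frac12\rho|v|^2\,dx$, the transport term integrating to zero by periodicity of the torus.

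For the two viscous contributions, integration by parts on $\T^2$ (again with no boundary terms) yields $-\mu\int_{\T^2}\Delta v\cdot v\,dx=\mu\|\nabla v\|_2^2$ and $-\nu\int_{\T^2}\nabla\div v\cdot v\,dx=\nu\|\div v\|_2^2$, which are precisely the dissipation terms in \eqref{eq:energy}. The central step is to identify the pressure term $\int_{\T^2}\nabla P\cdot v\,dx$ as the time derivative of the internal energy $\int_{\T^2}e(\rho)\,dx$. Differentiating the latter, substituting $\rho_t=-\div(\rho v)$ from the mass equation, and integrating by parts gives $\frac{d}{dt}\int_{\T^2}e(\rho)\,dx=\int_{\T^2}\rho\,e''(\rho)\,\nabla\rho\cdot v\,dx$; the relation $\rho e''(\rho)=P'(\rho)$, recorded just after the definition of $e$, recasts this as $\int_{\T^2}P'(\rho)\nabla\rho\cdot v\,dx=\int_{\T^2}\nabla P\cdot v\,dx$, matching the pressure contribution exactly.

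Assembling these computations, the testing procedure collapses to the pointwise-in-time identity
$$
\frac{d}{dt}\int_{\T^2}\Bigl(\frac12\rho|v|^2+e(\rho)\Bigr)dx+\mu\|\nabla v\|_2^2+\nu\|\div v\|_2^2=0,
$$
and integrating in time over $[0,T]$ delivers \eqref{eq:energy} as an equality, hence a fortiori the stated inequality. Since the proposition assumes sufficiently smooth solutions, no genuine obstacle arises: the only points requiring care are the justification of the several integrations by parts and of differentiation under the integral sign, all of which are routine under the assumed regularity. I would expect the mildest subtlety to be the internal-energy identification, which hinges entirely on the convexity relation $\rho e''(\rho)=P'(\rho)$ and on $e$ being the natural pressure potential; everything else is a direct consequence of the structure of \eqref{CNS}.
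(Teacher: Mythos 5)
Your proof is correct and follows essentially the same route as the paper: test the momentum equation by $v$, integrate the viscous terms by parts, and identify $\int_{\T^2}\nabla P\cdot v\,dx$ with $\frac{d}{dt}\int_{\T^2}e(\rho)\,dx$ via the relation $\rho e''(\rho)=P'(\rho)$ and the mass equation. The only cosmetic difference is that you run the pressure--internal-energy chain of equalities in the opposite direction from the paper, which changes nothing.
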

\begin{proof}
That fundamental estimate follows from testing the momentum equation by $v$ and integrating
by parts in the diffusion and pressure terms. Indeed: using  the 
definition of $e$ and the mass equation, we get
$$\begin{aligned}
\int_{\T^2}\nabla P\cdot v\,dx&=\int_{\T^2}\frac{P'(\rho)}\rho\nabla\rho\cdot(\rho v)\,dx=\int_{\T^2}\nabla(e'(\rho))\cdot(\rho v)\,dx
\\&=-\int_{\T^2} e'(\rho)\,\div(\rho v)\,dx
=\int_{\T^2} e'(\rho)\rho_t\,dx=\frac d{dt}\int_{\T^2} e(\rho)\,dx.\end{aligned}
$$
Then integrating in time completes the proof. 
\end{proof}








Let us next derive a higher order energy estimate, pointing 
out the dependency with respect to the volume viscosity $\nu.$
\begin{prop}\label{p:E2}
 Assume that there exist positive constants $\rho_*<\rho^*$  such that 
 \begin{equation}\label{eq:nonvacuum}
 \rho_* \leq \rho(t,x) \leq \rho^*  \mbox{ \ \ for all \ } (t,x) \in [0,T] \times  \T^2 .
\end{equation}
Then solutions to \eqref{CNS} with $\mu=1$ fulfill the following inequality: 
\begin{multline}\label{eq:e0}
 \|v(T),\nabla v(T), \rho(T) -\bar\rho\|_{2}^2 + \nu \|\div v(T)\|_{2}^2 + 
 \int_0^T (\|\nabla^2 v,\nabla v,v_t\|_{2}^2 + \nu \|\div v\|_{H^1}^2 )\,dt \\
 \leq C \exp\Bigl(C\|v_0\|_{2}^4\Bigr) \biggl(  \|v_0,\nabla v_0, \rho_0 -\bar\rho\|_{2}^2 
 + \nu \|\div v_0\|_{2}^2 + \nu^{-1}T\|v_0\|_{2}^2
+  \nu^{-1}\int_0^T \|\nabla \rho \|_{2}^2\,dt\biggr),
\end{multline}
provided $\nu$ is larger than some $\nu_0=\nu_0(\rho_*,\rho^*,P).$  
\end{prop}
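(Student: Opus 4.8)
The plan is to perform a higher-order energy estimate by testing the momentum equation against $v_t$, which is the natural multiplier for controlling the $H^1$ norm of the velocity and the parabolic dissipation. First I would rewrite the momentum equation in the form $\rho v_t + \rho v\cdot\nabla v - \Delta v - \nu\nabla\div v + \nabla P = 0$ and take the $L_2$ inner product with $v_t$. The term $\int \rho|v_t|^2\,dx$ is nonnegative and controls $\|v_t\|_2^2$ from below (using $\rho\geq\rho_*$); the viscous terms $-\int\Delta v\cdot v_t$ and $-\nu\int\nabla\div v\cdot v_t$ integrate by parts to produce the time derivatives $\frac12\frac{d}{dt}\|\nabla v\|_2^2$ and $\frac\nu2\frac{d}{dt}\|\div v\|_2^2$, which after integration in time yield the boundary terms $\|\nabla v(T)\|_2^2$ and $\nu\|\div v(T)\|_2^2$ on the left. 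This is the mechanism that brings out the $\nu$-weighted quantities appearing in \eqref{eq:e0}.

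Next I would treat the two genuinely problematic terms, namely the convective term $\int \rho\,(v\cdot\nabla v)\cdot v_t\,dx$ and the pressure term $\int\nabla P\cdot v_t\,dx$. For the convection term the idea is to bound it by $\|\rho\|_\infty^{1/2}\|v\cdot\nabla v\|_2\,\|\sqrt\rho\,v_t\|_2$ and to estimate $\|v\cdot\nabla v\|_2$ via a Ladyzhenskaya-type inequality in dimension two, $\|v\|_4^2\lesssim\|v\|_2\|\nabla v\|_2$, giving something like $\|v\|_4\|\nabla v\|_4\lesssim \|v\|_2^{1/2}\|\nabla v\|_2\|\nabla^2 v\|_2^{1/2}$. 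This is precisely where the fourth power $\|v_0\|_2^4$ in the exponential prefactor will originate: after Young's inequality one is left with a term of the form $C\|v\|_2^2\|\nabla v\|_2^4$ (or similar), whose integrating factor, controlled through the basic energy inequality \eqref{eq:energy} giving $\|v(t)\|_2\lesssim\|v_0\|_2$, produces the factor $\exp(C\|v_0\|_2^4)$ via a Gronwall argument. For the pressure term, since $P=P(\rho)$ and $P'>0$ is bounded on $[\rho_*,\rho^*]$, I would write $\nabla P = P'(\rho)\nabla\rho$ and bound $\int\nabla P\cdot v_t$ by $C_P\|\nabla\rho\|_2\,\|v_t\|_2$, again absorbing the $v_t$ factor into the good term $\|\sqrt\rho\,v_t\|_2^2$ by Young's inequality; this is the source of the $\nu^{-1}\int_0^T\|\nabla\rho\|_2^2$ contribution once one exploits the $\nu$-weighting (see below).

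To recover elliptic regularity, the key structural observation is that $-\Delta v - \nu\nabla\div v$ is an elliptic (Lamé-type) operator, so $\|\nabla^2 v\|_2$ and $\nu\|\nabla\div v\|_2$ can be controlled by $\|\rho v_t + \rho v\cdot\nabla v + \nabla P\|_2$ plus lower-order terms; combining this with the $\|v_t\|_2$ bound closes the estimate for $\int_0^T(\|\nabla^2 v\|_2^2 + \nu\|\div v\|_{H^1}^2)\,dt$. The role of the smallness condition $\nu\geq\nu_0(\rho_*,\rho^*,P)$ is to guarantee that the pressure and density-dependent error terms, which carry factors that can be made small relative to $\nu$, are genuinely absorbed rather than merely bounded; in particular the factor $\nu^{-1}$ in front of the $\int_0^T\|\nabla\rho\|_2^2$ term reflects that the pressure contribution is paid for against the large coefficient $\nu$ multiplying $\|\div v\|_{H^1}^2$. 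The main obstacle I anticipate is the careful bookkeeping of the convective term so that only $\|v\|_2$ (controlled uniformly by the data through the basic energy identity) appears in the exponential, while all higher Sobolev norms of $v$ are absorbed into the dissipative left-hand side; getting the powers to balance correctly in the two-dimensional Gagliardo–Nirenberg interpolation, and ensuring the resulting Gronwall inequality closes with exactly the stated prefactor $\exp(C\|v_0\|_2^4)$, is the delicate point. The final inequality \eqref{eq:e0} then follows by applying Gronwall's lemma to the assembled differential inequality and integrating in time.
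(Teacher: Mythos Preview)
Your overall strategy (test with $v_t$, combine with elliptic/second-order information, close with Ladyzhenskaya and Gronwall) matches the paper, and your treatment of the convective term is right. But there is a genuine gap in how you handle the pressure term in the $v_t$-test, and it is exactly the point where the $\nu^{-1}$ factor in front of $\int_0^T\|\nabla\rho\|_2^2\,dt$ is won or lost.

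You propose to bound $\int_{\T^2}\nabla P\cdot v_t\,dx$ directly by $C_P\|\nabla\rho\|_2\|v_t\|_2$ and then absorb $\|v_t\|_2$ by Young. That Young step leaves $C\|\nabla\rho\|_2^2$ on the right-hand side of the differential inequality for $\frac{d}{dt}\bigl(\|\nabla v\|_2^2+\nu\|\div v\|_2^2\bigr)$, with \emph{no} $\nu^{-1}$ in front. This term cannot later be traded against $\nu\|\div v\|_{H^1}^2$, because after your direct bound it no longer contains any factor of $\div v$. Your sentence ``this is the source of the $\nu^{-1}\int_0^T\|\nabla\rho\|_2^2$ contribution once one exploits the $\nu$-weighting'' is wishful: the weighting can only help if the pressure contribution is first expressed through $\div v$ or $\nabla\div v$. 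The paper achieves this by integrating by parts,
\[
\int_{\T^2}\nabla P\cdot v_t\,dx
= -\frac{d}{dt}\int_{\T^2}P\,\div v\,dx + \int_{\T^2}P'(\rho)\rho_t\,\div v\,dx,
\]
so that $-2\!\int P\,\div v$ is absorbed into the energy functional (harmless for $\nu\geq\nu_0$ thanks to \eqref{eq:convexity}), and the remainder, after using $\rho_t=-\div(\rho v)$, produces only terms like $\int|v\cdot\nabla\div v|$ and $\int(\div v)^2$ that \emph{can} be Young-ed against $\nu\|\nabla\div v\|_2^2$. The $\nu^{-1}\|\nabla\rho\|_2^2$ itself then arises in the second test (against $\Delta v$), via the identity $\int_{\T^2}\nabla P\cdot\Delta v\,dx=\int_{\T^2}\nabla P\cdot\nabla\div v\,dx$, which again places $\nabla\rho$ against $\nabla\div v$ rather than against $v_t$ or $\Delta v$. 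If in your Lam\'e-regularity step you treat $\nabla P$ as a generic $L_2$ forcing, you similarly lose the $\nu^{-1}$ when estimating $\|\nabla^2 v\|_2^2$; you must exploit that $\nabla P$ is a gradient and hence couples only to the potential part $\cQ v$, whose ellipticity constant is $1+\nu$.
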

\begin{proof} We take the $\T^2$ inner product of  the momentum equation with  $v_t,$ getting
\begin{equation}\label{eq:e1}
 \int_{\T^2}  \rho |v_t|^2 \,dx + \frac{1}{2}\frac{d}{dt} \int_{\T^2} \bigl(|\nabla v|^2 + \nu (\div v)^2\bigr) dx +
 \int_{\T^2} \nabla P \cdot v_t\, dx = - \int_{\T^2} (\rho v \cdot \nabla v)\cdot v_t\, dx.
\end{equation}

Integrating by parts and using the mass equation yields
$$\begin{aligned}
 \int_{\T^2} \nabla P\cdot v_t \, dx= -\int_{\T^2} P\,\div v_t\, dx &= -\frac{d}{dt} \int_{\T^2} P\,\div v\, dx + \int_{\T^2} P'(\rho) \rho_t \div v\, dx\\
&= -\frac{d}{dt} \int_{\T^2} P\,\div v\, dx - \int_{\T^2} P'(\rho) \,\div(\rho v)\, \div v\, dx.\end{aligned}
$$
Hence putting together with \eqref{eq:e1}, 
\begin{multline}\label{eq:e2}
 \frac{1}{2} \frac{d}{dt} \int_{\T^2} (|\nabla v|^2 + \nu (\div v)^2 - 2P\div v)\,dx + \int_{\T^2} \rho |v_t|^2 \,dx \\= 
\int_{\T^2} P'(\rho) \div (\rho v) \div v\, dx - \int_{\T^2} (\rho v\cdot \nabla v)\cdot v_t\,dx.
\end{multline}
Now, setting $K(\rho)=\rho P'(\rho)-P(\rho),$ one can check that
$$\begin{aligned}
\int_{\T^2} P'(\rho) \div (\rho v) \div v\, dx&=\int_{\T^2}(\div v)\;v\cdot\nabla(P(\rho))\,dx
+\int_{\T^2} \rho\nabla P'(\rho)\,(\div v)^2\,dx\\
&=-\int_{\T^2} P(\rho) \,v\cdot\nabla\div v\,dx +\int_{\T^2} K(\rho) (\div v)^2\,dx.\end{aligned}
$$
Hence, if \eqref{eq:nonvacuum} is fulfilled then we have
\begin{multline}\label{eq:e3}
 \frac{d}{dt} \int_{\T^2} \Bigl(|\nabla v|^2 + \nu (\div v)^2 - 2P(\rho)\div v\Bigr)dx + \int_{\T^2} \rho |v_t|^2\,dx \\\leq 
C\int_{\T^2} \bigl(|v\cdot\nabla \div v| + (\div v)^2 + |v\cdot \nabla v|^2 \bigr)dx .
\end{multline}
Next, testing the momentum equation by $\Delta v$ we get
\begin{equation*}\label{eq:e4}
\int_{\T^2} \bigl(|\Delta v|^2 + \nu |\nabla \div v |^2\bigr) dx - \int_{\T^2} \rho v_t \cdot\Delta v\, dx - 
\int_{\T^2} \nabla P \cdot  \Delta v \,dx \leq 
\int_{\T^2} |\rho v \cdot \nabla v \Delta v|\,dx.
\end{equation*}
Note that
$$- \int_{\T^2} \nabla P \cdot \Delta v\, dx= - \int_{\T^2} \nabla P \cdot \nabla\div v\, dx\leq C\int_{\T^2} |\nabla \rho| |\nabla \div v|\, dx.$$
Then,  combining with the basic energy identity and with \eqref{eq:e3} and introducing
\begin{equation}\label{eq:e5}
 E(v,\rho):=\int_{\T^2} \biggl(\rho |v|^2+2e(\rho) + |\nabla v|^2 +\nu\,( \div v)^2 -2 P(\rho)\div v\biggr) dx,
\end{equation}
we find,  
\begin{multline}\label{eq:e6}
 \frac{d}{dt} E(v,\rho)+
 \int_{\T^2} \rho|v_t|^2\, dx +\frac1{\rho^*}\int_{\T^2}\bigl(|\nabla v|^2+|\nabla^2 v|^2+\nu(\div v)^2+\nu |\nabla \div v|^2\bigr)dx \\ \leq \int_{\T^2} |v_t \cdot\Delta v|\, dx 
+ C\int_{\T^2} \bigl((\div v)^2+|v\cdot \nabla \div v| + \rho|v\cdot\nabla v|^2 + \frac1{\rho^*}|\nabla \rho|  |\nabla \div v| \bigr)dx\cdotp
\end{multline}
Hence, denoting
\begin{equation*}
 D(v):=\|\nabla v\|_{H^1}^2 +\|\sqrt\rho\,v_t\|^2_{L_2} + \nu \|\div v\|_{H^1}^2,
\end{equation*}
 Inequality \eqref{eq:e6} implies that for large  enough $\nu,$
$$ \frac{d}{dt}E(v,\rho) + \frac1{\rho^*}D(v) \leq C \int_{\T^2} \bigl(|v|^2|\nabla v|^2 
 + (|v|+ |\nabla \rho|)|\nabla \div v|\bigr) dx.$$
Of course, from the Ladyzhenskaya inequality, we have 
$$
\int_{\T^2}|v\cdot\nabla v|^2\,dx \leq C\|v\|_{2}\|\nabla v\|_{2}^2\|\Delta v\|_{2}.
$$
Therefore, we end up with
\begin{equation*}
 \frac{d}{dt}E(v,\rho) +  \frac1{\rho^*}D(v) \leq 
C\bigl(\|v\|^2_{2}\|\nabla v\|^2_{2}\|\nabla v\|^2_{2} + \nu^{-1}(\|v\|^2_{2} + \|\nabla \rho\|^2_{2})\bigr)\cdotp
\end{equation*}
Let us notice that  if $\nu\geq\nu_0(\rho_*,\rho^*,P)$ then we have, according to \eqref{eq:convexity}, 
 \begin{equation}\label{eq:e7}
 E(v,\rho)\approx \|v\|_{H^1}^2+ \| \rho -\bar\rho\|^2_{L_2} + \nu \|\div v\|^2_{L_2}. 
 \end{equation}
Hence  Gronwall inequality yields 
$$
\displaylines{
 E(v(T),\rho(T))+ \frac1{\rho^*}\int_0^TD(t)\,dt\leq \exp\biggl(C\int_0^T \|v\|^2_{2} \|\nabla v\|^2_{2} \,dt\biggr)\hfill\cr\hfill\times
  \biggl(E(v_0,\rho_0) + \frac C\nu\int_0^T  \exp\biggl(-C\int_0^t \|v\|^2_{2} \|\nabla v\|^2_{2} \,dt\biggr)  
 \bigl(\|v\|^2_{2} +\|\nabla \rho\|_{2}^2\bigr)dt\biggr)\cdotp}
$$
Remembering that the basic energy inequality implies that
$$\int_0^T \|v\|^2_{2} \|\nabla v\|^2_{2} \,dt\leq C\|v_0\|_{2}^4,$$
one may conclude that 
$$
  E(v(T),\rho(T)) + \frac1{\rho^*} \int_0^T D(v)\, dt
 \leq \exp\Bigl(C\|v_0\|_{2}^4\Bigr)  \biggl(E(v_0,\rho_0) + \frac C\nu\biggl(\|v_0\|_{2}^2\,T+\int_0^T\|\nabla\rho\|_{2}^2\,dt\biggr)
 \biggr),
 $$
 which obviously yields \eqref{eq:e0}. 
 \end{proof}


\section{Estimates with loss of integrability for the transport equation}\label{s:transport}

We are concerned with the proof of regularity estimates for the   transport equation
\begin{equation}\label{eq:T}
 \rho_t + v \cdot \nabla \rho + \rho \, \div v=0
\end{equation}
in some endpoint case where the transport field $v$ fails to be in $L_1(0,T;{\rm Lip})$ by a little.
\medbreak
More exactly, we aim at extending  Desjardins' results  in \cite{Des-DIE} to non divergence-free transport fields.  
Our  main result reads:
\begin{prop}\label{thm:transport}
 Let $1\leq q\leq \infty$ and $T>0$. Let $\rho_0 \in W^1_q(\T^2)$ and $v\in L_2(0,T;H^2(\T^2))$ such that $\div v \in L_1(0,T;L_\infty(\T^2)) \cap L_1(0,T;W^1_q(\T^2)).$
 Then the solution to \eqref{eq:T} fulfills for all $1\leq p<q,$ 
 $$\displaylines{
\sup_{t<T}  \|\nabla \rho(t)\|_{p}\leq 
K\biggl(\|\nabla \rho_0\|_{q}+\|\rho_0\|_{\infty}\sup_{t<T}\Big\|\int_0^t \nabla \div v\,d\tau\Big\|_{q}\biggr)\times \hfill\cr\hfill\times
    \exp\Big\{CT\int_0^T \|\nabla^2 v\|_{2}^2 \,dt\Big\}
       \exp\Big\{\int_0^T \|\div v\|_{\infty} \,dt\Big\},\quad}$$
where $K$ is an absolute constant, and where the constant $C$ depends only on $p$ and $q.$
\end{prop}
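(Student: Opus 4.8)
The plan is to derive and integrate a differential inequality for a Lebesgue norm of $w:=\nabla\rho$ whose exponent is \emph{allowed to decrease in time} from $q$ down to the target value $p$. This built-in loss of integrability is exactly the device that lets us absorb the contribution of $\nabla v$, which fails to be in $L_1(0,T;L_\infty)$ but does belong (in dimension two) to every $L_r$ with $r<\infty$ at only a logarithmic cost. First I would differentiate \eqref{eq:T}: with $w=\nabla\rho$ one gets, componentwise,
\[\partial_t w_j + v\cdot\nabla w_j = -\,\partial_j v_k\,w_k - (\div v)\,w_j - \rho\,\partial_j\div v.\]
The last term is a genuine source, responsible for the $\|\rho_0\|_\infty$ contribution; the middle term will produce the factor $\exp\{\int_0^T\|\div v\|_\infty\}$; and the first term $\partial_j v_k\,w_k$ is the dangerous one. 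Since $\T^2$ has finite measure, $\|w(t)\|_p\lesssim\|w(t)\|_{p(t)}$ whenever $p\le p(t)$, so it suffices to control the varying-exponent norm, the passage $L_p\hookrightarrow$ constant being hidden in $K$.

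The core computation is as follows. I choose $p(t)$ \emph{decreasing linearly} from $q$ to $p$, set $A(t):=\int_{\T^2}|w|^{p(t)}\,dx$, and compute $\frac{d}{dt}\log\|w\|_{p(t)}$. Differentiating the exponent yields the term $\frac{\dot p}{p}\bigl(\overline{\log|w|}-\log\|w\|_{p}\bigr)=\frac{\dot p}{p}\,S$, where $S\ge0$ is (up to the bounded constant $-p^{-1}\log|\T^2|$) the relative entropy of the probability measure $d\mu:=|w|^p\,dx/A$ with respect to the normalized Lebesgue measure $m$; since $\dot p<0$ this term is favorable and is large precisely where $w$ concentrates. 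For the dangerous term I would rewrite $\frac1A\int|w|^p|\nabla v|\,dx=\int|\nabla v|\,d\mu$ and apply the Gibbs variational principle $\int h\,d\mu\le\log\int e^{h}\,dm+\mathrm{Ent}_m(\mu)$ with $h=\alpha|\nabla v|$, followed by the two-dimensional Moser--Trudinger inequality $\int_{\T^2}\exp\bigl(\gamma|\nabla v|^2/\|\nabla v\|_{H^1}^2\bigr)\,dx\le C$. This gives, for every $\alpha>0$,
\[\int|\nabla v|\,d\mu\ \le\ \frac{\alpha}{4\gamma}\,\|\nabla v\|_{H^1}^2\ +\ \frac{p}{\alpha}\,S\ +\ \frac{C'}{\alpha}.\]
Taking $\alpha=p^2/|\dot p|$ makes the entropy term $\frac{p}{\alpha}S$ cancel exactly against $\frac{\dot p}{p}S$; with the linear choice $|\dot p|=(q-p)/T$ the surviving term $\frac{\alpha}{4\gamma}\|\nabla v\|_{H^1}^2$ integrates, after using $\|\nabla v\|_{H^1}^2\lesssim\|\nabla^2 v\|_2^2$ modulo lower order, to the factor $\exp\{CT\int_0^T\|\nabla^2 v\|_2^2\,dt\}$ with $C=C(p,q)$. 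This entropy/Moser--Trudinger absorption is the heart of the argument and the only place the two-dimensionality is essential.

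It remains to track the two lower-order contributions. The term $(\div v)\,w_j$ contributes $\lesssim\|\div v\|_\infty$ pointwise to the logarithmic derivative, hence the factor $\exp\{\int_0^T\|\div v\|_\infty\,dt\}$; along the way one also uses that $\rho$ solves the continuity equation, so $\|\rho(t)\|_\infty\le\|\rho_0\|_\infty\exp\{\int_0^t\|\div v\|_\infty\}$. For the source $\rho\,\nabla\div v$ I would integrate once in time: because $\rho$ varies along the flow only through the factor $\exp\{-\int\div v\}$, it may be commuted with the time integral at the cost of the same exponential, which replaces the crude $\int_0^t\|\nabla\div v\|_q$ by the sharper quantity $\|\rho_0\|_\infty\,\sup_{t}\bigl\|\int_0^t\nabla\div v\,d\tau\bigr\|_q$. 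Collecting the three contributions and applying Gronwall to the resulting differential inequality yields the stated estimate.

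The main obstacle is the second step: closing the estimate for the non-Lipschitz transport term $\int|w|^p|\nabla v|\,dx$ by balancing the Moser--Trudinger exponential integrability of $\nabla v$ against the entropy produced by the decreasing exponent, and calibrating $p(t)$ so that the leftover is controlled by $\int_0^T\|\nabla^2 v\|_2^2\,dt$. Everything else is bookkeeping around this absorption.
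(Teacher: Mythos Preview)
Your entropy/varying-exponent argument is essentially correct and is a genuinely different route from the paper's. The paper does \emph{not} work with a differential inequality for $\|\nabla\rho\|_{p(t)}$; instead it passes to Lagrangian coordinates via the flow $X$ of $v$, writes the explicit formula
\[
|\nabla\rho(t,x)|\le e^{3\int_0^t|\nabla v(\tau,X(\tau,Y(t,x)))|\,d\tau}\Bigl(|\nabla\rho_0(Y(t,x))|+|\rho_0(Y(t,x))|\Bigl|\int_0^t\nabla\div v(\tau,X(\tau,Y(t,x)))\,d\tau\Bigr|\Bigr),
\]
and then applies a single H\"older splitting $\tfrac1p=\tfrac1q+\tfrac1m$: the bracket goes into $L_q$, and the exponential factor into $L_m$. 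The $L_m$ bound is obtained by Young's inequality, Jensen (to pull the time integral outside the exponential), and the Trudinger inequality $\int_{\T^2}\exp\bigl(\delta_0|\nabla v|^2/\|\nabla^2 v\|_2^2\bigr)\,dx\le K$. Your approach replaces the Lagrangian pointwise formula and the H\"older split by the Gibbs inequality $\int h\,d\mu\le\log\int e^h\,dm+\mathrm{Ent}_m(\mu)$ together with the observation that the derivative of a decreasing exponent manufactures exactly the entropy needed to absorb the bad term. Both rely on Moser--Trudinger at the same spot; yours stays Eulerian and is closer to Desjardins' original scheme, while the paper's characteristics computation is arguably more transparent about where the $\exp\{\int\|\div v\|_\infty\}$ Jacobian factors enter.

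One caveat: your handling of the source $\rho\,\nabla\div v$ is not quite closed. In the Eulerian differential-inequality framework the source contributes $\|\rho\|_\infty\|\nabla\div v\|_{p(t)}$ to $\tfrac{d}{dt}\|w\|_{p(t)}$, and Gronwall yields $\int_0^T\|\nabla\div v\|_q\,dt$ rather than $\sup_t\bigl\|\int_0^t\nabla\div v\,d\tau\bigr\|_q$. Your sentence about ``commuting $\rho$ with the time integral because $\rho$ varies along the flow only through $\exp\{-\int\div v\}$'' is really a Lagrangian statement; to exploit it you would have to pass to characteristics for this piece, at which point the paper's pointwise formula already does the job. This is a cosmetic issue (the paper itself uses only the cruder quantity $A_q(T)=\int_0^T\|\nabla\div v\|_q\,dt$ in the applications), but you should either weaken the source term to $\int_0^T\|\nabla\div v\|_q\,dt$ or explicitly invoke the Lagrangian representation for that one contribution.
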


\begin{proof} 
We proceed by means of the standard characteristics method: our assumptions guarantee 
that $v$ admits a unique (generalized) flow $X$, solution to 
\begin{equation}\label{eq:T1}
 X(t,y)=y+\int_0^tv(\tau,X(\tau,y))\,d\tau.
\end{equation}

Then, setting 
\begin{equation}\label{eq:T2}
 u(t,y):=v(t,X(t,y))\andf a(t,y)=\rho(t,X(t,y)),
\end{equation}
  equation \eqref{eq:T} recasts as follows:
\begin{equation}\label{eq:T3}
 \frac{d a(t,y)}{dt} = -(\div v)(t,X(t,y)) \cdot a(t,y),
\end{equation}
the unique solution of which is given by
\begin{equation}\label{eq:T4}
 a(t,y)=\exp\bigg\{ \!-\!\int_0^t (\div v)(\tau,X(\tau,y))\, d\tau\bigg\} a_0(y).
\end{equation}

{}From the chain rule and Leibniz formula, we thus infer
$$\displaylines{\quad
 \nabla_y a(t,y) = \exp\Big\{-\!\int_0^t \!(\div v)(\tau,X(\tau,y))\, d\tau\!\Big\} \biggl(\nabla_y a_0(y)
\hfill\cr\hfill - a_0(y) \int_0^t (\nabla\div v)(\tau,X(\tau,y))\cdot\nabla_yX(\tau,y)\, d\tau\biggr)\cdotp\quad}
$$

Our goal  is  to estimate all these quantities in the Eulerian coordinates. Note that by \eqref{eq:T1}
and Gronwall lemma,  we obtain point-wisely that, denoting $Y(t,\cdot):=(X(t,\cdot))^{-1},$
$$ |\nabla_y X(t,y)| \!\leq\! \exp\Big\{ \!\int_0^t \!|\nabla_x v(\tau,X(\tau,y))| \,d\tau\!\Big\}\!\!\quad\hbox{and}\quad\!\!  |\nabla_x Y(t,x)| \!\leq\! \exp\Big\{ \!\int_0^t\! |\nabla_y u(\tau,Y(\tau,x))| \,d\tau\!\Big\} \cdotp
$$
As   $\nabla_x\rho(t,x) =\nabla_y a(t,Y(t,x))\cdot\nabla_xY(t,x),$ we get 
\begin{multline}\label{eq:T7}
 |\nabla\rho (t,x)|\leq   
 \exp\Big\{ 3\int_0^t |\nabla  v(\tau,X(\tau, Y(t,x)))|  \,d\tau \Big\} \\
\times  \biggl(|\nabla \rho_0(Y(t,x))|+|\rho_0(Y(t,x))|\bigg| \int_0^t \nabla\div v(\tau,X(\tau, Y(t,x))\,d\tau\bigg|\biggr)\cdotp
\end{multline}

Recall that the Jacobian of the change of coordinates $(t,y)\to (t,x)$ is given by
\begin{equation}\label{eq:T8}
 J_X(t,y)=\exp\Big\{ \int_0^t \div v(\tau,X(\tau,y)) \,d\tau\Big\} \leq \exp\Big\{ \int_0^t 
 \|\div v\|_{\infty}\, d\tau\Big\}\cdotp
\end{equation}

Hence taking the $L_p(\T^2)$ norm and using   H\"older inequality 
with $\frac 1p = \frac 1q + \frac 1m,$ we get
\begin{multline}\label{eq:T10}
 \|\nabla \rho(t)\|_{p} \leq
  \exp\Big\{ \frac1q\int_0^t  \|\div v\|_{\infty}\, d\tau\Big\}\  \biggl(\|\nabla \rho_0\|_{q} \\+ \|\rho_0\|_{\infty} 
 \bigg\|\int_0^t \nabla \div v (\tau,X(\tau,\cdot))\,ds\biggr\|_{q}\biggr)
 \bigg\|\exp\Big\{3\int_0^t |\nabla v(\tau,X(\tau,\cdot))|\,d\tau \Big\}\bigg\|_{m}\cdotp
\end{multline}

 To bound the last term,  we write that for all $\beta>0,$
$$ \int_0^t |\nabla v(\tau,X(\tau,\cdot)|\, d\tau  \leq \beta
  \int_0^t \frac{|\nabla v(\tau,X(\tau,\cdot))|^2}{\|\nabla^2 v(\tau,\cdot)\|^2_{2}}\,d\tau + \frac{1}{4\beta} \int_0^t \|\nabla^2 v(\tau,\cdot)\|^2_{2}\,d\tau.  $$

 Hence using the following   Jensen inequality,
 $$ \exp\Big\{\int_0^t\phi(s)\,ds\Bigr\}\leq \frac1t\int_0^t e^{t\phi(s)}\,ds, $$
we discover that 
 $$\displaylines{
 \int_{\T^2} \exp\Big\{3m \int_0^t |\nabla v(\tau,X(\tau,\cdot))|\,d\tau\Big\} \,dx \hfill\cr\hfill
 \leq \exp\bigg\{ \frac m{4\beta} \int_0^t \|\nabla^2 v(\tau,\cdot)\|^2_{2}\,d\tau\bigg\} 
 \frac{1}{t} \int_0^t \int_{\T^2} \exp\Big\{9m \beta  t 
 \frac{|\nabla v(\tau,X(\tau,\cdot)|^2}{\|\nabla^2 v(\tau,\cdot))\|^2_{2}} \Big\}\,dx\, d\tau.}
$$

 In the last integral we change coordinates and  get  
$$
\displaylines{
  \int_{\T^2} \exp\Big\{3m \int_0^t |\nabla v(\tau,X(\tau,\cdot))|\,d\tau\Big\} \,dx
 \leq \frac 1t \exp\Big\{ \frac m{4\beta} \int_0^t \|\nabla^2 v(\tau,\cdot)\|^2_{2}\,d\tau\Big\} \hfill\cr\hfill\times
 \biggl(\int_0^t \int_{\T^2} \exp\Big\{9m \beta  t \frac{|\nabla v(\tau,x))|^2}{\|\nabla^2 v(\tau,\cdot)\|^2_{2}} \Big\}dx\, ds\biggr)\exp\bigg(\int_0^t\|\div v\|_{\infty}\,d\tau\biggr)\cdotp}
$$

At this stage, to complete the proof,  it suffices to  apply the following Trudinger inequality (see e.g. \cite{Adams})
to $f=\nabla v$: there exist constants $\delta_0$ and $K$ such that for all $f$ in $H^1(\T^2),$
\begin{equation}\label{eq:T9}
 \int_{\T^2} \exp\bigg\{ \delta_0 \frac{|f(x)-\overline f|^{2}}{\|\nabla f\|^{2}_{2}} \bigg\} dx \leq K\with
 \overline f:=\frac{1}{|\T^2|} \int_{\T^2} f\,dx.
\end{equation}

Then, taking $\beta$ so small that $9m\beta t=\delta_0$, we end up with  
\begin{multline}\label{eq:T12}
 \int_{\T^2} \exp\Big\{3m \int_0^t |\nabla v(\tau,X(\tau,\cdot))|\,d\tau\Big\} \,dx \\
 \leq C \exp\bigg(\frac{9m t}{4\delta_0} \int_0^t\|\nabla^2 v(s,\cdot)\|^2_{2}\,ds\bigg)
\exp\bigg(\int_0^t\|\div v\|_{\infty}\,ds\biggr)\cdotp
\end{multline}

Combining  with \eqref{eq:T10} completes the proof 
of the proposition.
\end{proof}

%


\section{Linear systems  with variable coefficients}\label{s:maxreg}

Here we are concerned with the proof of maximal regularity  estimates for
the linear system 
\begin{equation}\label{eq:lamevar}
 \begin{array}{lcl}
  \rho u_t - \Delta u - \nu \nabla \div u = f &\hbox{in}& (0,T)\times\T^N,\\
  u|_{t=0}=u_0 &\hbox{in}& \T^N,
 \end{array}
\end{equation}
assuming only that  $\rho=\rho(t,x)$ is bounded by above and from below
(no time or space regularity whatsoever).
\smallbreak
In contrast with the previous section, we do not need the space dimension to be $2.$
As we want  to keep track of the dependency with respect to  $\nu$ for $\nu\to+\infty,$
we shall assume throughout that $\nu\geq0$ for simplicity.
\begin{thm}\label{th:lin}
 Let $T>0$ and assume that $\nu\geq0,$ 
 \begin{equation}\label{eq:nv}
  0<\rho_* \leq \rho(t,x)\leq \rho^* \mbox{ \ \ for \ \ } (t,x) \in  [0,T]\times\T^N.
 \end{equation}


 Then there exist positive constants $2_*,2^*$  depending only on $\rho_*$ and $\rho^*,$ with  $2_*<2<2^*,$  such 
 that for all $r \in (2_*,2^*)$ we have 
\begin{equation}
 \|u_t,\nabla^2 u,\nu \nabla \div u\|_{L_r( (0,T)\times\T^N)} \leq C(r,\rho_*,\rho^*)\bigl(\|f\|_{L_r( (0,T)\times\T^N))} + \|u_0\|_{W^{2-2/r}_r(\T^N)}\bigr).
\end{equation}
\end{thm}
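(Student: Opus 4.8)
**

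The plan is to prove the maximal regularity estimate for the variable-coefficient system \eqref{eq:lamevar} by perturbation off the constant-coefficient case, exploiting the fact that the Lam\'e operator $-\Delta-\nu\nabla\div$ has well-understood $L_r$ maximal regularity and that the range of admissible exponents $r$ shrinks to a neighbourhood of $2$ precisely as the oscillation of $\rho$ is controlled by $\rho_*,\rho^*$. First I would rewrite the equation by freezing the density at a constant reference value, say by dividing through or by moving the variable part to the right-hand side. Writing $\rho u_t = u_t + (\rho-1)u_t$ (after an innocuous normalisation of the constant), the system becomes
\begin{equation*}
 u_t - \Delta u - \nu\nabla\div u = f - (\rho-1)u_t \with u|_{t=0}=u_0.
\end{equation*}
The constant-coefficient operator on the left enjoys $L_r((0,T)\times\T^N)$ maximal regularity for every $r\in(1,\infty)$, uniformly in $\nu\geq 0$ for the full quantity $\|u_t,\nabla^2 u,\nu\nabla\div u\|_{L_r}$; this is the standard statement I would invoke (the Lam\'e system decouples into a heat equation for $\cP u$ and a parabolic equation for $\cQ u$ with diffusion $1+\nu$, and the $\nu$-weight on $\nabla\div u$ is exactly what a Fourier-multiplier / Mikhlin analysis delivers uniformly).

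The heart of the matter is then to absorb the perturbation $(\rho-1)u_t$. Applying constant-coefficient maximal regularity to the displayed equation gives
\begin{equation*}
 \|u_t,\nabla^2 u,\nu\nabla\div u\|_{L_r} \leq C_0(r)\bigl(\|f\|_{L_r} + \|(\rho-1)u_t\|_{L_r} + \|u_0\|_{W^{2-2/r}_r}\bigr),
\end{equation*}
and the naive bound $\|(\rho-1)u_t\|_{L_r}\leq \|\rho-1\|_{\infty}\|u_t\|_{L_r}$ is useless because $\|\rho-1\|_{\infty}$ is only controlled by $\rho^*-1$, which need not be small. The decisive trick — and this is where the restriction to $r$ near $2$ enters — is to interpolate the troublesome factor. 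One subtracts a well-chosen constant $c_0\in[\rho_*,\rho^*]$ rather than $1$, so that the multiplier becomes $\rho-c_0$ with $\|\rho-c_0\|_{\infty}\leq \tfrac12(\rho^*-\rho_*)$, and more importantly one uses that at $r=2$ the constant-coefficient maximal regularity constant $C_0(2)$ is explicitly computable and that multiplication by $(\rho-c_0)$ on $L_2$ has operator norm exactly $\|\rho-c_0\|_{\infty}$. The point is to arrange $C_0(2)\,\|\rho-c_0\|_{\infty}<1$ at the endpoint $r=2$, obtaining the estimate there by direct absorption, and then to propagate this to an open interval $(2_*,2^*)$ around $2$ by a continuity/interpolation argument: the operator norm of multiplication and the maximal regularity constant both depend continuously on $r$, so strict inequality at $r=2$ persists on a neighbourhood whose size depends only on $\rho_*,\rho^*$.

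Carrying this out, I would first establish the $r=2$ estimate cleanly — here one can even argue by energy methods, testing the equation against $u_t$ and against $-\Delta u$ exactly as in the proof of Proposition \ref{p:E2}, which already contains all the needed integrations by parts and shows that the $\nu$-weighted divergence term is controlled. Then I would treat general $r$ by the fixed-point/Neumann-series reformulation: define the solution operator $\cT_r$ of the constant-coefficient problem, observe that solving \eqref{eq:lamevar} amounts to inverting $\Id + \cT_r\circ M_{\rho-c_0}\partial_t$ on the maximal regularity space, where $M_{\rho-c_0}$ is multiplication; this inverse exists as a Neumann series provided the composite has operator norm below $1$, which is the quantitative smallness I secured near $r=2$. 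The main obstacle is precisely pinning down that the admissible interval $(2_*,2^*)$ depends only on $\rho_*,\rho^*$ and not on any regularity of $\rho$: this forces the argument to go through the bare multiplication operator (whose norm sees only $\|\rho-c_0\|_{\infty}$) and through the continuity in $r$ of the scalar constant $C_0(r)$, rather than through any commutator estimate that would demand $\nabla\rho$. Uniformity in $\nu$ is then automatic, since the $\nu$-dependence is carried entirely inside the norm $\|u_t,\nabla^2 u,\nu\nabla\div u\|_{L_r}$ that the constant-coefficient theory already bounds uniformly.
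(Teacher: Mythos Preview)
Your approach is essentially the one the paper takes: freeze the density at a constant (the paper picks $\rho^*$ rather than the midpoint, but either works), move $(\rho-c_0)u_t$ to the right-hand side, observe that the constant-coefficient bound $\|c_0 u_t\|_{L_2}\leq \|h\|_{L_2}$ holds with constant exactly $1$ by testing against $u_t$, so that the absorption factor is $\|\rho-c_0\|_\infty/c_0<1$, and then extend to $r$ near $2$ by interpolating the maximal-regularity constant $C_r$ between $L_2$ and $L_{p_0}$. The only point to sharpen in your write-up is that it is specifically the $u_t$-constant (not the full $C_0(r)$ for the triple norm) that equals $1$ at $r=2$ and drives the absorption; the $\nabla^2 u$ and $\nu\nabla\div u$ bounds are then recovered afterward from the elliptic Lam\'e estimate, exactly as the paper does in its final step.
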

\begin{proof}
First, we reduce the problem to the one with null initial data, solving
\begin{equation}\label{eq:homo}
 \begin{array}{rcl}
  \rho^*\bar u_t -\Delta \bar u - \nu \nabla \div \bar u =0 &\hbox{in}&  (0,T)\times\T^N,\\
  \bar u|_{t=0}=u_0 &\hbox{in}& \T^N.
 \end{array}
\end{equation}




Applying the divergence operator to the equation  yields 
\begin{equation*}\label{n6}
  \rho^*(\div \bar u)_t - (1+\nu)\Delta \div \bar u = 0.
\end{equation*}
Hence the basic maximal regularity theory  for the heat equation in the torus gives 
\begin{equation}\label{n7}
 (1+\nu)\|\nabla\div \bar u\|_{L_p( (0,T)\times\T^N)} \leq C\|\div u_0\|_{W^{1-2/p}_p(\T^N)}.
\end{equation}

Then we restate System \eqref{eq:homo} in the form
\begin{equation}\label{n8}
\rho^* \bar u_t -\Delta \bar u=\nu \nabla \div \bar u,
\end{equation}
and get
$$\begin{aligned}
 \|\bar u_t,\nabla^2 \bar u \|_{L_p(\T^N\times (0,T))} 
 &\leq K_p\bigl(\nu \|\nabla \div \bar u\|_{L_p( (0,T)\times\T^N)} + \|u_0\|_{W^{2-2/p}_p(\T^N)}\bigr)\\
 & \leq K_p\Bigl(\frac{\nu}{1+\nu}\Bigr)\|u_0\|_{W^{2-2/p}_p(\T^N)}.\end{aligned}
$$

Therefore, as  $\nu\geq0,$  we end up with 
\begin{equation}\label{eq:n1}
 \|\bar u_t,\nabla^2 \bar u, \nu\nabla\div\bar u \|_{L_p( (0,T)\times\T^N)}
  \leq K_p\|u_0\|_{W^{2-2/p}_p(\T^N)}.
\end{equation}

Next we look for $u$ in  the form
\begin{equation}\label{n0}
 u = w+\bar u,
\end{equation}
where  $w$ fulfills
\begin{equation}\label{n1}
 \rho w_t - \Delta w - \nu \nabla \div w = f + (\rho^*-\rho) \bar u_t=: g,\qquad
 w|_{t=0}=0.
\end{equation}

 Thanks to \eqref{eq:nv} and  \eqref{n1},  we have 
\begin{equation}\label{eq:n1b}
 \|g\|_{L_p( (0,T)\times\T^N)} \leq\|f\|_{L_p( (0,T)\times\T^N)}
  + K_p (\rho^*-\rho_*) \|u_0\|_{W^{2-2/p}_p(\T^N)}.
\end{equation}

Now, setting $h:=g+(\rho^*-\rho)w_t,$
System \eqref{n1} reduces to  the following one:
\begin{equation}\label{eq:n2}
 \begin{array}{rcl}
  \rho^* w_t -\Delta w - \nu \nabla \div w = h &\hbox{in}& (0,T)\times\T^N, \\[1ex]
  w|_{t=0} =0 &\hbox{in}& \T^N.
 \end{array}
\end{equation}

We claim that for all $p\in(1,\infty)$ we have
\begin{equation}
 \|\rho^* w_t\|_{L_p( (0,T)\times\T^N)} \leq C_p\|h\|_{L_p( (0,T)\times\T^N)}
\end{equation}
with $C_p\to1$ for $p\to2.$ 
\medbreak
Indeed, to see that $C_2=1,$ we just test the first equation of \eqref{eq:n2} by $w_t,$
which yields
$$\rho^*\|w_t\|_{L^2(\T^N)}^2+\frac12\frac d{dt}\biggl(\|\nabla w\|_{L^2}^2
+\nu\|\div v\|_{L^2}^2\biggr)= \int_{\T^N} h\,w_t\,dx.$$

Then for any fixed $p_0\in(1,+\infty)\setminus\{2\},$ the standard maximal regularity 
estimate reads 
$$
\|\rho^*w_t\|_{L_{p_0}( (0,T)\times\T^N)} \leq K_{p_0}\|h\|_{L_{p_0}( (0,T)\times\T^N)},
$$
and H\"older inequality gives us   for all $\theta\in[0,1],$ 
$$
\|z\|_{L_r( (0,T)\times\T^N)} \leq \|z\|_{L_2( (0,T)\times\T^N)}^{1-\theta} \|z\|_{L_{p_0}( (0,T)\times\T^N)}^\theta \ \hbox{ with }\   \frac{1}{r}= \frac{1-\theta}{2} + \frac{\theta}{p_0}\cdotp
$$
Therefore $C_p\leq C_{p_0}^\theta,$ whence $\limsup C_p\leq1$ for $p\to2$ (as  $\theta \to 0$). 
\medbreak
Now, remembering the definition of $h,$ we  write for all $p\in(1,\infty),$
$$\begin{aligned}
\|\rho^*w_t\|_{L_p( (0,T)\times\T^N)}&\leq C_p\bigl(\|g\|_{L_p( (0,T)\times\T^N)}
+\|(\rho^*-\rho)w_t\|_{L_p( (0,T)\times\T^N)}\bigr)\\
&\leq C_p\|g\|_{L_p( (0,T)\times\T^N)}+ C_p\biggl(1-\frac{\rho_*}{\rho^*}\biggr)\|\rho^*w_t\|_{L_p( (0,T)\times\T^N)}.
\end{aligned}$$
Therefore, if \footnote{Clearly, we just need that  $1-C_p(1-\frac{\rho_*}{\rho^*}) >0.$
 However taking  that slightly stronger condition allows to get a more explicit
 inequality.} 
\begin{equation}\label{eq:p}
1-C_p\biggl(1-\frac{\rho_*}{\rho^*}\biggr)\geq\frac12\frac{\rho_*}{\rho^*},
\end{equation}
then we end up with 
\begin{equation}\label{eq:n3}
\|\rho^*w_t\|_{L_p( (0,T)\times\T^N)}\leq \frac{2\rho^*C_p}{\rho_*}\: \|g\|_{L_p( (0,T)\times\T^N)}.
\end{equation} 
Let us emphasize that \eqref{eq:p} is fulfilled for $p$ close enough to $2,$
due to $C_p\to1$ for $p\to2.$\medbreak

It is now easy to complete the proof.  We take \eqref{eq:n2} in the form
$$
 \begin{array}{rcl}
   -\Delta w - \nu \nabla \div w = g - \rho w_t &\hbox{in}&  (0,T)\times\T^N, \\[1ex]
  w|_{t=0} =0 &\hbox{in}& \T^N.
 \end{array}$$
Then one  concludes as before that 
$$\begin{aligned}
 \|\nabla^2w,\nu\nabla\div w\|_{L_p( (0,T)\times\T^N)}&\leq 
 K_p  \|g-\rho w_t\|_{L_p( (0,T)\times\T^N)}\\
& \leq K_p\Bigl(  \|g\|_{L_p( (0,T)\times\T^N)} +C_{\rho_*,\rho^*}\|w_t\|_{L_p( (0,T)\times\T^N)}\Bigr)\cdotp
\end{aligned}
$$
Hence, putting together with \eqref{eq:n3} and assuming that $p$ is close enough to $2,$
\begin{equation}
 \|w_t,\nabla^2w,\nu\nabla\div w\|_{L_p( (0,T)\times\T^N)}\leq C_{\rho_*,\rho^*}
   \|g\|_{L_p( (0,T)\times\T^N)}.
   \end{equation} 
 Then combining with \eqref{eq:n1b} and \eqref{eq:n1}  completes the proof. \end{proof}


\section{Final bootstrap argument}

In what follows, we fix some $0<\rho_*<\rho^*$ and denote by $2_*$ and $2^*$
the corresponding Lebesgue exponents provided by Theorem \ref{th:lin}. 
We assume that the initial data  $(\rho_0,v_0)$ satisfies
all the requirements of Theorem \ref{th:1}

Take some time  $T$  such that $1\leq T \leq\nu$ (stronger conditions will appear below),
and  assume  that we have a solution $(\rho,v)$ to \eqref{CNS} on $[0,T]\times\T^2,$ fulfilling
the regularity properties of Theorem \ref{th:1}  for some $2<q<\min(2^*,4),$ and
\begin{equation}\label{eq:div}
\exp\biggl(\int_0^T\|\div v\|_{\infty}\,dt\biggr)\leq 2.
\end{equation}
Then it is clear that $\rho$   obeys
\begin{equation}\label{eq:boot1}
\rho_*\leq\rho \leq \rho^* \quad\hbox{on }\ [0,T]\times \T^2.
\end{equation}

For all $p\in[2,q],$  denote  $A_p(T):=\|\nabla \div v\|_{L_1(0,T;L_p(\T^2 ))}$ and assume that,
for some small enough constant $c_0>0,$ we have 
\begin{equation}\label{eq:smallA}
 A_q(T)\leq c_0.
\end{equation}
Obviously, if $Kc_0\leq\log2$ where $K$ stands for the norm of the embedding 
$\dot W^1_q(\T^2)\hookrightarrow L_\infty(\T^2),$ then \eqref{eq:div} is fulfilled.
We shall assume in addition that $c_0\rho^*\leq1.$
\medbreak

We are going to show that if \eqref{eq:smallA} is fulfilled then, for sufficiently large $\nu,$
all the norms of the solution are under control. Then, bootstrapping, this will justify  \eqref{eq:smallA} a posteriori.

\subsubsection*{Step 1. High order energy estimate for $v$}
Let  $E_0^2:=1+\|v_0\|_{H^1}^2+\|\rho_0-\bar\rho\|_{2}^2.$ By \eqref{eq:e0} we 
easily get, remembering that $\nu^{-1}T\leq1$, 
\begin{multline}\label{eq:H1}
\|v\|_{L_\infty(0,T;H^1)}^2+\nu\|\div v\|_{L_\infty(0,T;L_2)}^2+\|\rho-\bar\rho\|_{L_\infty(0,T;L_2)}^2\\+\int_0^T\bigl(\|\nabla v\|_{H^1}^2 +
\|v_t\|_{2}^2+ \nu\|\nabla\div v\|_{2}^2\bigr)dt\leq
 C e^{C\|v_0\|_{2}^4}  \bigl(E_0^2+\nu^{-1}T \|\nabla \rho\|_{L_\infty(0,T;L_2)}^2\bigr)\cdotp
\end{multline}

\subsubsection*{Step 2. Regularity estimates at $L_p$ level for the density}

From   Proposition  \ref{thm:transport}, we find that 
there exists an absolute constant $K$ such that 
for all $r\in[2,q),$ there exists some constant $C_r>0$ so that$$
\sup_{t\in[0,T]}\|\nabla \rho(t)\|_{r} \leq K\biggl(\bigl(\|\nabla\rho_0\|_{q}+\rho^*A_q(T)\bigr) \exp\biggl(C_rT\int_0^T\|\nabla^2 v\|_{2}^2\,dt\biggr)\biggr)\cdotp
$$
Hence,  bounding the last term according to \eqref{eq:H1}, and using \eqref{eq:smallA} and the definition of~$E_0,$
\begin{multline}\label{eq:boot2}
\sup_{t\in[0,T]}\|\nabla \rho(t)\|_{r}\\ \leq K\bigl(\|\nabla\rho_0\|_q+1\bigr)\exp(C_rE_0^2Te^{C\|v_0\|_{2}^4}) \exp\Bigl(C_r\nu^{-1}T^2 e^{C\|v_0\|_{2}^4}\|\nabla \rho\|_{L_\infty(0,T;L_2)}^2\Bigr)\cdotp
\end{multline}
Taking $r=2,$ we deduce that  if 
\begin{equation*}
C_2\nu^{-1}T^2 e^{C\|v_0\|_{2}^4}\|\nabla \rho\|_{L_\infty(0,T;L_2)}^2\leq \log2 ,
\end{equation*}
 then we have 
\begin{equation}\label{eq:nablarho}
\sup_{t\in[0,T]}\|\nabla \rho(t)\|_{2} \leq 2K\bigl(\|\nabla\rho_0\|_q+1\bigr) 
\exp(C_2E_0^2Te^{C\|v_0\|_{2}^4}\bigr)\cdotp
\end{equation}

Using an obvious connectivity argument, we conclude that  
 \eqref{eq:nablarho} holds true whenever
\begin{equation}\label{eq:nu}
\nu>\frac{4K^2C_2}{\log2}\bigl(\|\nabla\rho_0\|_q+1\bigr)^2 \exp(2C_2E_0^2Te^{C\|v_0\|_{2}^4}\bigr) T^2 e^{C\|v_0\|_{2}^4}.
\end{equation}

Reverting to \eqref{eq:H1}, we readily get,
 taking a larger constant $C$ if need be,
\begin{multline}\label{eq:H1b}
\|v\|_{L_\infty(0,T;H^1)}^2+\nu\|\div v\|_{L_\infty(0,T;L_2)}^2+\|\rho-\bar\rho\|_{L_\infty(0,T;L_2)}^2\\+\int_0^T\bigl(\|\nabla v\|_{H^1}^2
+\|v_t\|_{L_2}^2 + \nu\|\nabla\div v\|_{L_2}^2\bigr)dt\leq C e^{C\|v_0\|_{2}^4} E_0^2.
\end{multline}

Of course, combining \eqref{eq:nablarho} with \eqref{eq:boot2} ensures that for all $r\in[2,q),$ we have
\begin{equation}\label{eq:boot3}
\sup_{t\in[0,T]}
\|\nabla \rho(t)\|_{L_r} \leq K\bigl(\|\nabla\rho_0\|_q+1\bigr)\, \exp(C_rE_0^2Te^{C\|v_0\|_{2}^4}).
\end{equation}

\subsubsection*{Step 3. Maximal regularity at $L_p$ level for the velocity} 

We rewrite the velocity equation as follows:
$$\rho\d_tv-\Delta v-\nu\nabla\div v=-\nabla P-\rho v\cdot\nabla v.$$

Then Theorem \ref{th:lin} ensures that for all $p\in[2,q),$
\begin{equation}\label{eq:boot4}
V_p(T)\leq C_p\bigl(\|v_0\|_{W^{2-\frac2p}_{p}}+\|\nabla P+\rho v\cdot\nabla v\|_{L_p(0,T\times\T^2)}\bigr)
\end{equation}
with 
$V_p(T):=\|v\|_{L_\infty(0,T;W^{2-\frac2p}_{p})} +\|v_t,\nabla^2v,\nu\nabla\div v\|_{L_p(0,T\times\T^2)}.$
\medbreak
By H\"older inequality
$$\|v\cdot\nabla v\|_{L_p(0,T\times\T^2)}\leq T^{\frac1s}\|v\|_{L_\infty(0,T;L_s)}\|\nabla v\|_{L_4(0,T;L_4)}
\quad\hbox{with}\quad \frac1s+\frac14=\frac1p\cdotp $$
Hence using embedding and Inequality \eqref{eq:H1b},  
$$\|v\cdot\nabla v\|_{L_p(0,T\times\T^2)}\leq C T^{\frac1p-\frac14} E_0^2e^{C\|v_0\|_{2}^4},$$ 
and reverting to \eqref{eq:boot4} and using \eqref{eq:boot3} thus yields for some constant 
$C_P$ depending only on the pressure law,
\begin{equation}\label{eq:boot5}
V_p(T)\leq C_p\Bigl(\|v_0\|_{W^{2-\frac2p}_{p}}+C_PT^{\frac1p}\bigl(\|\nabla\rho_0\|_{q}+1\bigr)e^{CE_0^2Te^{C\|v_0\|_{2}^4}}
+ T^{\frac1p-\frac14} E_0^2e^{C\|v_0\|_{2}^4}\Bigr)\cdotp\end{equation}

\subsubsection*{Step 4. Regularity estimate at $L_q$ level for the density}

The standard estimate for transport equation with Lispchitz velocity field yields
$$ \sup_{t\leq T} \|\nabla \rho(t)\|_{q}\leq \bigl(\|\nabla \rho_0\|_{q} 
 + \rho^* A_q(T)\bigr) \exp\{\|\nabla v\|_{L_1(0,T;L_\infty)}\}\cdotp$$
 Hence, remembering \eqref{eq:smallA} and  using the embedding $\dot W^1_p(\T^2)\hookrightarrow L_\infty(\T^2)$ 
 to handle the last term, we get
$$ \sup_{t\leq T} \|\nabla \rho(t)\|_{q} \leq    \bigl(\|\nabla\rho_0\|_{q}+1\bigr) \exp\bigl\{CT^{\frac1{p'}}V_p(T)\bigr\}\cdotp$$
Then one can bound $V_p(T)$ according to \eqref{eq:boot5} and eventually get,
 \begin{equation}\label{eq:boot6}
 \sup_{t\leq T} \|\nabla \rho(t)\|_{q} \leq  \bigl(\|\nabla\rho_0\|_q+1\bigr) \exp\bigl\{T^{\frac1{p'}}I_0^p(T)\bigr\},
\end{equation}
with $I_0^p(T):= C_p\Bigl(\|v_0\|_{W^{2-\frac2p}_{p}}+C_PT^{\frac1p}\bigl(\|\nabla\rho_0\|_{q}+1\bigr)e^{CE_0^2Te^{C\|v_0\|_{2}^4}}\Bigr)\cdotp$

\subsubsection*{Step 5. Maximal regularity at $L_q$ level for the velocity}

Let us use again  Theorem \ref{th:lin}, but with Lebesgue exponent $q.$ We have 
\begin{equation}\label{eq:boot7}
V_q(T)\leq C_q\bigl(\|v_0\|_{W^{2-\frac2q}_{q}}+\|\nabla P\|_{L_q(0,T\times\T^2)}+\|\rho v\cdot\nabla v\|_{L_q(0,T\times\T^2)}\bigr)\cdotp
\end{equation}
The last term may be bounded as in \eqref{eq:boot5} (with $q$ instead of $p$), and the pressure
term may be handled thanks to \eqref{eq:boot6}. At the end we get
$$V_q(T)\leq C_q\Bigl(\|v_0\|_{W^{2-\frac2q}_{q}}+  
C_PT^{\frac1q} \bigl(\|\nabla\rho_0\|_{L_q}+1\bigr)\exp (T^{\frac1{q'}} I_0^q(T))\Bigr)\cdotp$$

\subsubsection*{Step 6. Final bootstrap} 

In order to complete the proof, it suffices to check that if $\nu$ is large enough then 
we do have \eqref{eq:smallA}. 
This is just a consequence of the fact that
$$A_q(T)\leq T^{\frac1{q'}}\|\nabla\div v\|_{L_q(0,T\times\T^2)} \leq \frac1\nu T^{\frac1{q'}}V_q(T).$$
Hence it suffices to choose $\nu$ fulfilling \eqref{eq:nu} and
\begin{equation*}\label{eq:nu2}
\nu\geq T^{\frac1{q'}} 
C_q\Bigl(\|v_0\|_{W^{2-\frac2q}_{q}}+  
C_PT^{\frac1q}\bigl(\|\nabla\rho_0\|_{L_q}+1\bigr) \exp (T^{\frac1{p'}} I_0^q(T))\Bigr)\cdotp
\end{equation*}


\section{The incompressible limit issue}

The aim of this section is to prove Theorem \ref{th:2}.
In what follows the time $T$ is fixed, and $\nu$ is larger than the threshold viscosity $\nu_0$
given  by Theorem \ref{th:1}. Throughout, we shall agree that $C_{0,T}$ denotes a `constant'
depending only on $T$ and on the norms of the initial data 
appearing in Theorem \ref{th:1}. 
Let us consider the corresponding solution $(\rho,v).$ Then 
 Inequality  \eqref{E0} already ensures that 
all  the terms with $\cQ v$ in \eqref{eq:conv} are bounded as required.
\smallbreak
In order to bound the other terms of \eqref{eq:conv}, 
it is convenient to  restate System \eqref{CNS} in terms of the divergence-free part $\cP v$ and potential part
$\cQ v$ of the velocity field $v,$
  and of the discrepancy $r:=\rho-\wt\rho$  between $\rho$ and the following `incompressible'  density $\wt\rho$
defined as the unique solution of the following transport equation:
\begin{equation}\label{x3}
 \wt\rho_t + \cP v \cdot \nabla \wt\rho=0, \qquad \wt\rho|_{t=0}=\rho_0.
\end{equation}

As  $r$ fulfills:
\begin{equation}\label{x4}
 r_t + \cP v \cdot \nabla r = -\div(\rho \cQ v), \qquad r|_{t=0}=0,
\end{equation}
 we have for all $t\in[0,T],$
\begin{equation}\label{x6}
\|r(t)\|_q\leq \int_0^t\bigl(\|\rho\,\div\cQ v\|_q + \|\cQ v\cdot\nabla\rho\|_q\bigr)\,d\tau.
\end{equation}
Now, we have
$$\| \cQ v \cdot \nabla \rho\|_{L_q(0,T\times\T^2)}\leq \|\cQ v\|_{L_q(0,T;L_\infty)}\|\nabla\rho\|_{L_\infty(0,T;L_q)}$$
and,  by virtue of Poincar\'e inequality, 
$$
\|\rho\,\div\cQ v\|_{L_q(0,T\times\T^2)}\leq C\rho^*\|\nabla\div\cQ v\|_{L_q(0,T\times\T^2)}.
$$
Therefore, taking advantage of Sobolev embedding and of Inequality  \eqref{E1},
we end up with 
\begin{equation}\label{x7}
 \sup_{0\leq t\leq T} \|r(t)\|_{q} \leq C_{0,T} \nu^{-1}.
\end{equation}

Next, we restate the equation $(\ref{CNS})_2$ as follows:
\begin{equation}\label{mm1}
 \wt\rho \cP v_t + \wt\rho \cP v \cdot \nabla \cP v - \Delta \cP v + \nabla Q +K=0\end{equation}
 $$\displaylines{\with  Q:=P-(1+\nu)\div v,\quad  
  K_1:=r \cP v_t,\quad K_2:= \rho \cQ v_t,\hfill\cr\hfill
K_3:=  r \cP v \cdot \nabla \cP v \ \hbox{\ and \ } K_4:=  \rho (\cQ v \cdot \nabla\cP v  +  v \cdot \nabla \cQ v).}
$$


Subtracting  \eqref{INS} from \eqref{mm1}, we get
\begin{equation}\label{dd1}
  \eta (\cP v -u)_t +  \eta u \cdot \nabla (\cP v -u) - \Delta (\cP v - u) + \nabla(Q-\Pi)+ K+L=0
 \end{equation}
  with $$
L:=(\wt\rho - \eta) \cP v_t + (\wt\rho - \eta)\cP v \cdot \nabla \cP v  +\eta(\cP v -u) \cdot \nabla \cP v.$$
Of course,  initially, we have
\begin{equation*}
 \cP v - u|_{t=0} =0, \qquad \qquad \wt\rho -  \eta |_{t=0}=0.
\end{equation*}

Now,  we test \eqref{dd1} by $\cP v -u$ getting, since $\div u=0,$
\begin{equation}\label{x10}
\frac12 \frac{d}{dt} \int_{\T^2} \eta |\cP v -u|^2\,dx +\int_{\T^2} |\nabla (\cP v - u)|^2 \,dx = \int_{\T^2} K\cdot (u-\cP v)\,dx + \int_{\T^2} L\cdot(u-\cP v)\,dx.
\end{equation}

To analyze the  terms of the left-hand side,  
we need some information coming from  the continuity equations. The difference of $\wt\rho$ and $\eta$ fulfills 
\begin{equation*}\label{dd2}
 (\wt\rho - \eta)_t + u \cdot \nabla (\wt\rho -\eta) = - (\cP v - u) \cdot \nabla \wt\rho.
\end{equation*}
Testing it by $(\wt\rho - \eta)$ and defining $q^*$ by $\frac{1}{q^*}+\frac{1}{q}=\frac 12,$
 we find that
$$ \sup_{t \leq T} \|(\wt\rho - \eta)(t)\|_2 \leq  \int_0^T \|\cP v -u\|_{q^*} \|\nabla\wt\rho\|_{q}\, dt.$$
As $\wt\rho$ satisfies \eqref{x3}, we have for all $t\in[0,T],$
$$ \|\nabla\wt\rho(t)\|_{q}\leq  \|\nabla\wt\rho_0\|_{q} \,e^{\int_0^t\|\nabla\cP v\|_\infty\,d\tau}\cdotp$$
Therefore, thanks to \eqref{E2} and Sobolev embedding, 
   \begin{equation}\label{xx10}
  \sup_{t \leq T} \|(\wt\rho - \eta)(t)\|_2 \leq 
  C_{0,T}\int_0^T \|\cP v -u\|_{q^*} \,dt.
\end{equation}
One can now estimate all the terms of the right-hand side  of \eqref{x10}. Regarding  the first term of $L,$ 
we have 
$$\begin{aligned}
\int_0^T\!\!\! \int_{\T^2} (\wt\rho - \eta) \cP v_t \cdot (\cP v -u) \,dx \,dt &\leq 
\int_0^T \|\wt\rho-\eta\|_2 \|\cP v_t\|_{q}\|\cP v - u\|_{q^*}\, dt \\
\leq C_{0,T}\biggl(\int_0^T\|\cP v&- u\|_{q^*}\,dt\biggr) \biggl(\int_0^T \|\cP v_t\|_{q}^2\,dt\biggr)^{1/2} \biggl(\int_0^T \|\cP v- u\|_{q^*}^2 \,dt\biggr)^{1/2}.
\end{aligned}
$$
Hence taking  $\theta \in (0,1)$ below according to Gagliardo-Nirenberg inequality,
and remembering that $q>2$ and that  $H^1(\T^2) \hookrightarrow L_m(\T^2)$ for all $m<\infty,$
 we get 
\begin{multline}\label{x11}
\int_0^T\!\!\! \int_{\T^2} (\wt\rho - \eta) \cP v_t \cdot (\cP v -u) \,dx\,dt 
\leq C_{0,T} \int_0^T \|\nabla(\cP v- u)\|_{2}^{2\theta}  \|\cP v- u\|_2^{2-2\theta}\,dt \\
\leq \frac18\int_0^T \|\nabla(\cP v- u)\|_{2}^{2}\,dt + C_{0,T} \int_0^T \|\cP v- u\|_2^2 \,dt.
\end{multline}
 Next, we write
 $$
\left|\int_{\T^2} (\wt\rho -\eta) (\cP v\cdot \nabla \cP v)\cdot(\cP v- u)\, dx \right| \leq 
\|\wt\rho-\eta\|_{2}\|\cP v\cdot\nabla\cP v\|_{q}\|\cP v-u\|_{q^*},
$$
hence, arguing exactly as above, 
$$
\left| \int_0^T \!\!\!\int_{\T^2} (\wt\rho - \eta) (\cP v \cdot \nabla \cP v)\cdot (\cP v- u)\, dx\,dt \right| \leq \frac18 \int_0^T \|\nabla(\cP v- u)\|_{2}^{2}\,dt + C_{0,T} \int_0^T\! \|\cP v- u\|_2^2 \,dt.
$$
Similarly, we have
$$\left| \int_0^T\!\!\! \int_{\T^2}  \eta \big( (\cP v- u) \cdot \nabla \cP v\big)\cdot (\cP v -u) \,dx\,dt \right| 
\leq \rho^*\int_0^T\|\nabla \cP v\|_\infty \|\cP v -u\|_2^2 \,dt.$$
As regards $K_1,$ we have, defining  $\wt q$  by $\frac{2}{q} + \frac{1}{\wt q} =1,$
$$\begin{aligned}
\left| \int_0^T\!\!\!\int_{\T^2} r \cP v_t \cdot(\cP v -u) \,dx\, dt \right| &\leq \int_0^T \|r\|_q \|\cP v\|_q \|\cP v - u\|_{\wt q} \,dt \\
&\leq \frac18\int_0^T \|\nabla(\cP v -u)\|_{2}^2 \,dt + C_{0,T} \int_0^T\|\cP v-u\|_2^2\,dt,
\end{aligned}$$
and for $K_2,$ one can write that 
\begin{equation*}
 \int_{\T^2} \rho \cQ v_t \cdot(\cP v - u)\, dx = \frac{d}{dt} \int_{\T^2} \rho \cQ v \cdot(\cP v -u) \,dx - \int_{\T^2} (\rho (\cP v -u))_t \cdot\cQ v \,dx.
\end{equation*}
For the last term, we have, using that $\rho_t=-\div(\rho v)$ and integrating by parts,
$$\begin{aligned}
 \int_{\T^2} (\rho (\cP v -u))_t \cdot\cQ v \,dx&= \int_{\T^2} \rho (\cP v -u)_t \cdot\cQ v \,dx
 + \int_{\T^2} \rho_t (\cP v -u) \cdot\cQ v \,dx\\
 &=\int_{\T^2} \rho (\cP v -u)_t \cdot\cQ v \,dx
 +\int_{\T^2} (\rho v)\cdot \bigl(\nabla(\cP v-u)\cdot\cQ v\bigr)dx\\
&\hspace{3cm}+\int_{\T^2} (\rho v)\cdot \bigl((\cP v-u)\cdot\nabla\cQ v\bigr)dx.\end{aligned}
$$
The first term is  of order $\nu^{-1}$ after time integration on $[0,T],$ since
 it may  be bounded by
$$\left| \int_{\T^2} (\rho (\cP v -u))_t \cdot\cQ v \,dx\right|\leq \rho^*\|\cQ v\|_2\bigl(\|\cP v_t\|_2+\|u_t\|_2\bigr).$$
For the second term, one may write
$$
\biggl|\int_{\T^2} (\rho v)\cdot \bigl(\nabla(\cP v-u)\cdot\cQ v\bigr)dx\biggr|
\leq\frac1{8} \int_{\T^2}\|\nabla(\cP v-u)\|_2^2\,dx+C(\rho^*)^2\|v\|_\infty^2\|\cQ v\|_2^2,
$$
and for the last one, we have
$$
\biggl|\int_{\T^2} (\rho v)\cdot \bigl((\cP v-u)\cdot\nabla\cQ v\bigr)dx\biggr|\leq
\rho^*\|v\|_\infty\|\cP v-u\|_2\|\nabla\cQ v\|_2.
$$
In the same way, we get 
 $$
 \left|  \int_0^T\!\!\! \int_{\T^2} (K_3+K_4)\cdot(\cP v-u) \,dx\,dt \right| \leq\int_0^T\|\cP v-u\|_{q^*}\bigl(
 \|\cQ v\|_q\|\nabla\cP v\|_2+\|\nabla\cQ v\|_q\|v\|_2\bigr)\,dt,
 $$ whence using \eqref{E1} and Poincar\'e inequality to handle the terms with $\cQ v,$
 \begin{equation*}
\left|  \int_0^T\!\!\! \int_{\T^2} (K_3+K_4)\cdot(\cP v-u) \,dx\,dt \right| \leq \frac18\int_0^T \|\cP v -u\|_{H^1}^2 \,dt + \nu^{-2} C_{0,T}.
\end{equation*}

Summing up, we return to \eqref{x10} and integrate, to  find 
\begin{multline*}
\rho_* \sup_{t\leq T} \|(\cP v -u)(t)\|_2^2 + \int_0^T\|\nabla (\cP v -u)\|^2_2 \,dt \\
\leq \sup_{t \leq  T} \bigg| \int_{\T^2} (\rho \cQ v)(t)\cdot (\cP v -u)(t)\, dx\bigg| +C_{0,T} \int_0^T \|\cP v-u\|_2^2\,dt + C_{0,T} \nu^{-1}.
\end{multline*}
But we see that
\begin{multline*}
\left| \int_{\T^2} \rho \cQ v \cdotp (\cP v  -u)\,dx\right| \leq \frac 12 \rho_* \|\cP v -u\|_2^2 + 
 C \|\cQ v\|^2_2 \leq \frac 12 \rho_* \|\cP v -u\|_2^2 + C_{0,T}\nu^{-1}.
\end{multline*}
So altogether, we get after using Gronwall lemma,
\begin{equation*}
 \sup_{t\leq T} \big(\|(\cP v -u)(t)\|_2^2 +\|(\wt\rho - \eta)(t)\|_2^2\big) + \int_0^T\|\nabla (\cP v -u)\|^2_2 \,dt \leq C_{0,T}\nu^{-1}.
\end{equation*}
Remembering \eqref{x7} and that  $\wt\rho-\eta= r+(\wt\rho- \eta)$ completes the proof  of Theorem \ref{th:2}.\qed
\bigbreak\noindent{\bf Acknowledgments.} 
The first author (R.D.) is partly supported by ANR-15-CE40-0011.
 The second author (P.B.M.) has been partly supported by National Science Centre grant
2014/14/M/ST1/00108 (Harmonia).

\end{document}